\documentclass[11pt]{article}
\usepackage{amssymb}
\usepackage{amsmath}
\usepackage{graphicx}
\usepackage{setspace}

\oddsidemargin 0pt
\evensidemargin 0pt
\marginparwidth 40pt

\topmargin 0pt           
\headsep 20pt            

\tolerance=1000
\textheight 9.0in
\textwidth 6.6in

\begin{document}

\newtheorem{theorem}{Theorem}[section]
\newtheorem{tha}{Theorem}
\newtheorem{conjecture}[theorem]{Conjecture}
\newtheorem{corollary}[theorem]{Corollary}
\newtheorem{lemma}[theorem]{Lemma}
\newtheorem{claim}[theorem]{Claim}
\newtheorem{proposition}[theorem]{Proposition}
\newtheorem{construction}[theorem]{Construction}
\newtheorem{definition}[theorem]{Definition}
\newtheorem{question}[theorem]{Question}
\newtheorem{problem}[theorem]{Problem}
\newtheorem{remark}[theorem]{Remark}
\newtheorem{observation}[theorem]{Observation}

\newcommand{\ex}{{\mathrm{ex}}}

\newcommand{\EX}{{\mathrm{EX}}}

\newcommand{\AR}{{\mathrm{AR}}}

\def\endproofbox{\hskip 1.3em\hfill\rule{6pt}{6pt}}
\newenvironment{proof}%
{%
\noindent{\it Proof.}
}%
{%
 \quad\hfill\endproofbox\vspace*{2ex}
}
\def\qed{\hskip 1.3em\hfill\rule{6pt}{6pt}}
\def\ce#1{\lceil #1 \rceil}
\def\fl#1{\lfloor #1 \rfloor}
\def\lr{\longrightarrow}
\def\e{\varepsilon}
\def\ex{{\rm\bf ex}}
\def\cB{{\cal B}}
    \def\cD{{\cal D}}
\def\cF{{\cal F}}
\def\cG{{\cal G}}
\def\cH{{\cal H}}
\def\ck{{\cal K}}
\def\cI{{\cal I}}
\def\cJ{{\cal J}}
\def\cL{{\cal L}}
\def\cM{{\cal M}}
\def\cP{{\cal P}}
\def\cQ{{\cal Q}}
\def\cS{{\cal S}}
\def\imp{\Longrightarrow}
\def\1e{\frac{1}{\e}\log \frac{1}{\e}}
\def\ne{n^{\e}}
\def\rad{ {\rm \, rad}}
\def\equ{\Longleftrightarrow}
\def\pkl{\mathbb{P}^{(k)}_\ell}
\def\podd{\mathbb{P}^{(k)}_{2t+1}}
\def\peven{\mathbb{P}^{(k)}_{2t+2}}
\def\TT{{\mathbb T}}
\def\bbT{{\mathbb T}}
\def\cE{{\mathcal E}}

\def\wt{\widetilde}
\def\wh{\widehat}
\voffset=-0.5in

\pagestyle{myheadings}
\markright{{\small \sc F\"uredi, Jiang, Seiver:}
  {\it\small Tur\'an numbers of $k$-uniform linear paths}}

\title{\huge\bf Exact solution of the hypergraph Tur\'an problem for $k$-uniform linear paths}

\author{
Zolt\'an F\"uredi\thanks{Dept. of Mathematics, University of Illinois, Urbana, IL 61801, USA. E-mail:
z-furedi@illinois.edu
\newline
Research supported in part by the Hungarian National Science Foundation
 OTKA, by the National Science Foundation under grant NFS DMS 09-01276, and by a European Research Council Advanced Investigators Grant 267195.
}\quad\quad
Tao Jiang\thanks{Dept. of Mathematics, Miami University, Oxford,
OH 45056, USA. E-mail: jiangt@muohio.edu. } \quad \quad Robert Seiver\thanks{Dept. of Mathematics,
Miami University, Oxford, OH 45056, USA. E-mail:
seiverrs@muohio.edu.\newline
\newline \quad
version of June 28, 2011.
 \newline\indent
{\it 2010 Mathematics Subject Classifications:}
05D05, 05C65, 05C35.\newline\indent
{\it Key Words}:  Tur\'an number, path, extremal hypergraphs, delta systems.
} }

\date{}

\maketitle
\begin{abstract}
A $k$-uniform  linear path of length $\ell$, denoted by  $\pkl$, is a family of $k$-sets $\{F_1,\ldots, F_\ell\}$ such that
$|F_i\cap F_{i+1}|=1$ for each $i$ and $F_i\cap F_j=\emptyset$ whenever $|i-j|>1$.
 Given a $k$-uniform hypergraph $H$ and a positive integer $n$,
the {\it $k$-uniform hypergraph Tur\'an number} of $H$, denoted by
$\ex_k(n,H)$, is the maximum number of edges in a $k$-uniform
hypergraph $\cF$ on $n$ vertices that does not contain $H$ as a
subhypergraph. With an intensive use of the delta-system method,
we determine $\ex_k(n,P^{(k)}_\ell)$ exactly for all fixed
$\ell\geq 1, k\geq 4$, and sufficiently large $n$.
 We show that
$$\ex_k(n,\podd)={n-1\choose k-1}+{n-2\choose k-1}+\ldots+{n-t\choose k-1}.$$
The only extremal family consists of all
the $k$-sets in $[n]$ that meet some fixed set of $t$ vertices.
We also show that
$$\ex(n,\peven)={n-1\choose k-1}+{n-2\choose k-1}+\ldots+{n-t\choose k-1}+{n-t-2\choose k-2},$$
and describe the unique extremal family. Stability results on these bounds and some related results are also established.
\end{abstract}

\section{Introduction}
As usual, a hypergraph $\cF=(V,E)$ consists of a set $V$ of vertices and a set $E$ of edges, where each edge
is a subset of $V$. We call edges of $\cF$ {\it members} of $\cF$.
If each member of $\cF$ is a $k$-subset of $V$, we say that $\cF$ is a {\it $k$-uniform hypergraph} or a {\it $k$-uniform set system}.  If $|V|=n$, it is often convenient to just let $V=[n]=\{1,\ldots, n\}$.
For convenience, we  write $\cF\subseteq {[n]\choose k}$ to indicate that $\cF$ is a $k$-uniform hypergraph on
 vertex set $[n]$.
There is a long history in the study of extremal problems concerning hypergraphs.
Early well-known results include 
   the Erd\H{o}s-Ko-Rado theorem that
says that for all $n\geq 2k$ the maximum size of a $k$-uniform family on $n$ vertices in which every two members intersect
is ${n-1\choose k-1}$, with equality achieved by taking all the subsets of $[n]$ containing a fixed element.   Given a family $\cH$ of hypergraphs, the {\it $k$-uniform hypergraph Tur\'an number} of $\cH$, denoted by
$\ex_k(n,\cH)$, is the maximum number of edges in a $k$-uniform hypergraph $\cF$ on $n$ vertices that does not contain a member of $\cH$ as a subhypergraph.
   An $\cH$-free family $\cF\subseteq {[n]\choose k}$ is called {\it extremal}  if $|\cF|=\ex_k(n,\cH)$.
If $\cH$ consists of a single hypergraph $H$, we write $\ex_k(n,H)$ for
$\ex_k(n,\{H\})$.
If we let $M^{(k)}_2$ denote the $k$-uniform hypergraph consisting of two disjoint $k$-sets, then the Erd\H{o}s-Ko-Rado theorem says $\ex_k(n,M^{(k)}_2)={n-1\choose k-1}$ for all $n\geq 2k$.
More generally, 
 Erd\H{o}s showed

\begin{theorem} {\bf (Erd\H{o}s~\cite{erdos-matching})} \label{matching}
Let $k,t$ be positive integers.  
   There exists a number $n(k,t)$
such that  for all integers $n>n(k,t)$, if $\cF\subseteq
{[n]\choose k}$ contains no $t+1$ pairwise disjoint members then
$$|\cF|\leq {n\choose k}-{n-t\choose k}.$$
Furthermore, 
  the only extremal family $\cF$ consists of all the $k$-sets of $[n]$ meeting some
fixed set $S$ of $t$ elements of $[n]$.
\end{theorem}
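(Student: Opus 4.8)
The plan is to prove the value of $\ex_k(n,M^{(k)}_{t+1})$ and the uniqueness of the extremal family together, by induction on $t$, the base case $t=0$ being trivial (then $\cF=\emptyset$). Throughout write $\nu(\cF)$ for the maximum size of a set of pairwise disjoint members; the hypothesis is $\nu(\cF)\le t$. First I would dispose of the case $\nu(\cF)\le t-1$: the induction hypothesis for $t-1$ gives $|\cF|\le\binom{n}{k}-\binom{n-t+1}{k}$, which is strictly smaller than the target value once $n$ is large, so such an $\cF$ is neither extremal nor a counterexample. Hence we may assume $\nu(\cF)=t$. Fix a maximum matching $M_1,\ldots,M_t$ with union $Y$ (so $|Y|=kt$); since the matching cannot be enlarged, every member of $\cF$ meets $Y$.

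I would then split according to whether some single-vertex deletion lowers the matching number, writing $\cF-x$ for the subfamily of members avoiding $x$ (a $k$-uniform family on $[n]\setminus\{x\}$). \textbf{Case 1:} some vertex $x$ has $\nu(\cF-x)\le t-1$. The induction hypothesis for $t-1$ gives $|\cF-x|\le\binom{n-1}{k}-\binom{n-t}{k}$, while the members through $x$ number at most $\binom{n-1}{k-1}$; adding and applying Pascal's identity yields $|\cF|\le\binom{n}{k}-\binom{n-t}{k}$. For uniqueness, equality forces the members through $x$ to be \emph{all} $k$-sets containing $x$, and (by the inductive uniqueness) $\cF-x$ to be all $k$-sets meeting a fixed $(t-1)$-set $S'\subseteq[n]\setminus\{x\}$; then $\cF$ is precisely the family of all $k$-sets meeting the $t$-set $S'\cup\{x\}$, as claimed.

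\textbf{Case 2:} $\nu(\cF-v)=t$ for \emph{every} vertex $v$, i.e.\ each vertex is missed by some maximum matching. This is the step I expect to be the crux, and the point is that here $\cF$ is far too small to be relevant. For each $v$ choose a maximum matching avoiding $v$ and let $Z_v$ (with $|Z_v|=kt$) be its union; any member of $\cF$ containing $v$ must meet $Z_v$, since otherwise it would extend that matching to size $t+1$, so it has the form $\{v\}\cup A$ with $A$ a $(k-1)$-set meeting $Z_v$, whence $\deg_\cF(v)\le kt\binom{n-2}{k-2}$. Since every member of $\cF$ meets $Y$, we get $|\cF|\le\sum_{v\in Y}\deg_\cF(v)\le (kt)^2\binom{n-2}{k-2}=O(n^{k-2})$, which is below $\binom{n}{k}-\binom{n-t}{k}=\Theta(n^{k-1})$ for $n$ large.

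Combining the three cases closes the induction and shows the extremum is attained only in Case 1, only by a star on a $t$-set. The one piece of real bookkeeping is the choice of the threshold $n(k,t)$, which should be defined recursively so that it exceeds $n(k,t-1)+1$ (so that the induction hypothesis applies on $n$ and on $n-1$ vertices) and is large enough to validate the two numerical comparisons above; everything else is Pascal's identity together with the degree count in Case 2. An alternative route would be to pass first to a left-compressed $\cF$ and analyze shifted families directly, but the induction above already delivers both the exact value and the uniqueness, so I would not pursue it.
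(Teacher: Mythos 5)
Your argument is correct, but there is nothing in the paper to compare it with: Theorem~\ref{matching} is quoted from Erd\H{o}s's 1965 paper \cite{erdos-matching} and is used here as a black box (e.g.\ in the estimate of $|\cD|$ in Section~5), so the authors give no proof of it. Your induction on $t$ is a clean, self-contained route to exactly the statement as quoted, i.e.\ the bound and uniqueness for $n>n(k,t)$: the trichotomy (matching number drops below $t$; some vertex $x$ with $\nu(\cF-x)\le t-1$; every vertex missed by a maximum matching) is exhaustive, the Pascal computation $\binom{n-1}{k}+\binom{n-1}{k-1}-\binom{n-t}{k}=\binom{n}{k}-\binom{n-t}{k}$ in Case~1 is right, the equality analysis there does force the full star at $x$ together with (by inductive uniqueness) a star on a $(t-1)$-set, hence a star on a $t$-set, and the Case~2 degree count $\deg_\cF(v)\le kt\binom{n-2}{k-2}$, summed over the $kt$ vertices of $Y$, gives $O(n^{k-2})\ll\binom{n}{k}-\binom{n-t}{k}=\Theta(n^{k-1})$, so such families are never extremal for large $n$. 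The recursive choice of $n(k,t)$ you describe (at least $n(k,t-1)+1$ and large enough for the two numerical comparisons) is exactly the bookkeeping needed. This is essentially the standard elementary proof of the Erd\H{o}s matching theorem in the regime of large $n$; it does not recover Erd\H{o}s's sharper quantitative information on $n(k,t)$, but none is claimed in the statement, so your proposal fully establishes what the paper cites.
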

Surveys on Tur\'an problems of graphs and hypergraphs can be found in~\cite{ZF} and  ~\cite{keevash}.
Hypergraph Tur\'an problems are notoriously difficult.  The aymptotics are determined for very
few hypergraphs and exact results are particularly rare. Most exact results
concern specific hypergraphs on a small number of vertices (and often for fixed small values of $k$).
For example, the exact value of
$\ex_k(n,H)$ is determined (for large $n$) for the Fano plane, expanded triangle, $4$-books with $2$
pages, $4$-books with $3$ pages, $4$-books with $4$ pages, some $3$-graphs with independent neighborhoods,
extended complete graphs, generalized fans, and a couple of others (see~\cite{keevash} for details and references).
By comparison, our results in this paper establish the exact value  for every hypergraph in an infinite family (and for all $k\geq 4$).
In this regard, the exact result on extended complete graphs~\cite{oleg} (refining~\cite{dhruv}) is similar in nature.
However, the hypergraphs $H$ we consider are much more  sparse and more ``spread out''. So, our result may be viewed
the first of its kind.

\section{The Hypergraph problem for paths and main results}

In this paper, we focus on the hypergraph problem for paths. As explained at the end of 
  the previous section,
the ``spread out'' nature of a path distinguishes the problem from most of the hypergraph Tur\'an problems that
have been studied.  For $k=2$, the problem was solved
by Erd\H{o}s and Gallai in the following classic theorem.

\begin{theorem} {\bf(Erd\H{o}s-Gallai~\cite{erdos-gallai})}
Let $G$ be a graph on $n$ vertices containing no path of length $\ell$.
Then $e(G)\leq \frac{1}{2}(\ell-1)n$. Equality holds iff $G$ is the disjoint union
of complete graphs on $\ell$ vertices.
\end{theorem}

For $k\geq 3$,  the most general definition of a $k$-uniform path 
   is that of a Berge path.
A 
  {\it Berge path} of length $\ell$  is a family of distinct sets $\{F_1,\ldots, F_\ell\}$ and $\ell+1$ distinct vertices
  $v_1,\ldots, v_{\ell+1}$ such that for each $1\leq i\leq \ell$,  $F_i$ contains $v_i$ and $v_{i+1}$.
Let $\cB^{(k)}_\ell$ denote the family of $k$-uniform Berge paths of length $\ell$.
Gy\H ori et al. determined $\ex_k(n,\cB^{(k)}_\ell)$ exactly for infinitely many $n$.

\begin{theorem} {\bf(Gy\H ori et al.~\cite{gyori})}
If $\ell>k\geq 2$ then $\ex_k(n,\cB^{(k)}_\ell)\leq \frac{n}{\ell}{\ell\choose k}$. Furthermore, equality is attained
if $\ell$ divides $n$. If $3\leq \ell\leq k$, then $\ex_k(n,\cB^{(k)}_\ell)=\frac{n(\ell-1)}{k+1}$.
Furthermore, here equality is attained if $k+1$ divides $n$.
\end{theorem}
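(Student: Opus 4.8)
\medskip
\noindent\emph{A plan for the proof.}
Both lower bounds come from one observation: in a Berge path $F_1,\dots,F_\ell$ consecutive edges $F_i,F_{i+1}$ share the vertex $v_{i+1}$, so if the vertex set is split into ``clusters'' with no edge meeting two clusters, then (by induction along the path) every Berge path lies inside a single cluster. For $\ell>k$ with $\ell\mid n$ I would partition $[n]$ into $n/\ell$ classes of size $\ell$ and take all $k$-subsets inside each class, i.e.\ a disjoint union of $n/\ell$ copies of $K^{(k)}_\ell$; this has $\frac n\ell\binom\ell k$ edges, and a Berge path confined to $\ell$ vertices has length at most $\ell-1$. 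For $3\le\ell\le k$ with $(k+1)\mid n$ I would partition $[n]$ into classes of size $k+1$ and keep any $\ell-1$ of the $k+1$ possible $k$-subsets in each class; this has $\frac{n(\ell-1)}{k+1}$ edges and a Berge path confined to one class uses at most $\ell-1<\ell$ edges. It remains to prove the matching upper bounds.

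For the upper bound I would reduce to connected hypergraphs: it suffices to show that a connected $k$-uniform hypergraph $\cF$ on $v$ non-isolated vertices with no Berge path of length $\ell$ satisfies $e(\cF)\le\frac v\ell\binom\ell k$ when $\ell>k$, respectively $e(\cF)\le\frac{\ell-1}{k+1}\,v$ when $3\le\ell\le k$, because summing such a bound over the connected components of an arbitrary $\cB^{(k)}_\ell$-free hypergraph reproduces the global bound. Assume first $\ell>k$. If $v\le\ell$ then $e(\cF)\le\binom vk$, and since $\frac1m\binom mk=\frac1k\binom{m-1}{k-1}$ is non-decreasing in $m$ this is at most $\frac v\ell\binom\ell k$; so I may assume $\cF$ connected with $v\ge\ell+1$. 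Now I would fix a \emph{longest} Berge path $P=v_0\,F_1\,v_1\cdots F_p\,v_p$, necessarily of length $p\le\ell-1$, and run a longest-path argument in the spirit of the Erd\H{o}s--Gallai theorem: maximality already forbids an edge outside $\{F_1,\dots,F_p\}$ through $v_0$ or $v_p$ from meeting a vertex off $P$, and a P\'osa-type rotation (replace a prefix of $P$ by a chord inside $\{v_0,\dots,v_p\}$ to obtain another longest path with the same initial vertex) forces the set of achievable endpoints, together with the edges touching it, to be ``localized'' near $V(P)$; combining this with connectivity (which produces an edge leaving $V(P)$) and carefully counting how many edges can attach to $V(P)$ should yield $e(\cF)\le\frac v\ell\binom\ell k$. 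An equivalent route is an induction on $n$ that peels off a suitable set $W$ of at most $\ell$ vertices meeting at most $\frac{|W|}{\ell}\binom\ell k$ edges.

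For $3\le\ell\le k$ I would again pass to a connected $\cB^{(k)}_\ell$-free hypergraph on $v$ vertices and aim for $e(\cF)\le\frac{\ell-1}{k+1}\,v$. Here a longest Berge path has length at most $\ell-1\le k-1$, strictly below the edge size, so every path edge carries free vertices and the accounting is far more forgiving; one expects to show that a connected $\cB^{(k)}_\ell$-free hypergraph is assembled from clique-like clusters on at most $k+1$ vertices, each carrying at most $\ell-1$ edges, glued along cut vertices in a way that creates no longer Berge path, which is exactly the extremal structure. Summing $e(\cF_i)\le\frac{\ell-1}{k+1}v_i$ over components then gives $\ex_k(n,\cB^{(k)}_\ell)=\frac{n(\ell-1)}{k+1}$ when $(k+1)\mid n$, along with the claimed extremal family.

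The routine ingredients are the constructions, the reduction to connected hypergraphs, and the small-$v$ case. The real obstacle is the connected case with $\ell>k$ and $v$ large: unlike in a graph, a Berge path of length $p$ commits only two of the $k$ vertices of each $F_i$, so the path edges may wander through the rest of the vertex set and an edge through an endpoint of $P$ need not lie inside $V(P)$. Controlling this ``Berge slack'' while carrying out the rotations, and turning a long Berge cycle plus connectivity into a long Berge path, is where essentially all the work lies; it is also where the comparison between $\ell$ and $k$ enters, which is why the two ranges are treated separately.
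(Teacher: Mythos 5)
This theorem is not proved in the paper at all: it is quoted from Gy\H ori, Katona and Lemons, and the only thing the paper supplies is the description of the extremal configurations (disjoint copies of the complete $k$-graph on $\ell$ vertices when $\ell>k$, and classes of size $k+1$ carrying $\ell-1$ edges each when $3\leq\ell\leq k$). Your lower-bound constructions coincide with these and are verified correctly: a Berge path of length $\ell$ needs $\ell+1$ distinct vertices and $\ell$ distinct edges, so it cannot live inside a class of $\ell$ vertices, nor inside a class containing only $\ell-1$ edges, and no edge crosses between classes.

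The gap is that everything beyond the constructions --- i.e.\ the actual content of the theorem, the two upper bounds --- is left as a plan rather than an argument. The reduction to connected components and the case $v\leq\ell$ are fine, but for the main case ($\ell>k$, $v>\ell$) you only say that a P\'osa-type rotation ``should yield'' the bound, and for $3\leq\ell\leq k$ that ``one expects to show'' a decomposition into clusters of at most $k+1$ vertices; you yourself flag these as the places where all the work lies. Concretely, the Erd\H os--Gallai rotation scheme does not transfer: a chord used in a rotation must be an edge distinct from $F_1,\dots,F_p$ and only two of the $k$ vertices of each $F_i$ are committed to the path, so rotated paths can reuse hyperedges or collide with the slack vertices, and it is not clear what invariant survives the rotation. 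Moreover, the target bound is of a different shape from the graph case: Erd\H os--Gallai counting bounds the number of edges \emph{incident to the path} linearly in $v$, whereas here one must account for up to $\binom{\ell}{k}$ edges per $\ell$ vertices (all $k$-subsets of a class), so ``carefully counting how many edges can attach to $V(P)$'' has no evident route to the stated constant. The alternative you mention --- an induction on $n$ peeling off a low-degree vertex or a small set $W$ spanning few edges --- is closer to a workable strategy (and to the published proof of \cite{gyori}), but as written it is only named, not executed: you do not show that such a vertex or set $W$ exists in a $\cB^{(k)}_\ell$-free hypergraph. As it stands the proposal establishes only the lower bounds, not the theorem.
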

For the $\ell>k$ case, equality is attained by partitioning the $n$ vertices into sets of size $\ell$ and taking
a complete $k$-uniform hypergraph on each of the $\ell$-set. For the $3\leq \ell\leq k$ case, equality is attained by partitioning
the $n$ vertices into sets of size $k+1$ and taking exactly $\ell-1$ of the $k$-sets in each of these $(k+1)$-sets.
   The case $\ell=2$, $\ex_k(n,\cB^{(k)}_2)=\lfloor n/k \rfloor$, is obvious.

A notion that is more restrictive than a Berge path is that of a  loose path.
A  
   {\it loose  path} of length $\ell$ is a family of sets $\{F_1,\ldots, F_\ell\}$ such that $F_i\cap F_j\neq \emptyset$
iff $|i-j|=1$. Let $\cP^{(k)}_\ell$ denote the family of $k$-uniform loose paths of length $\ell$.

\begin{theorem}{\bf(Mubayi-Verstra\"ete~\cite{MV1})} \label{mv}
Let $k,\ell\geq 3$, $t=\fl{(\ell-1)/2}$ and $n\geq (\ell+1)k/2$. Then  $\ex_k(n,\cP^{(k)}_3)={n-1\choose k-1}$.
For $\ell,k>3$, we have $$t{n-1\choose k-1}+O(n^{k-2})\leq \ex_k(n,\cP^{(k)}_\ell)\leq 2t{n-1\choose k-1}+O(n^{k-2}).$$
 \end{theorem}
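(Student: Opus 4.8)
\emph{Overview.} The statement bundles three claims: the exact value $\ex_k(n,\cP^{(k)}_3)=\binom{n-1}{k-1}$, the lower bound $\ex_k(n,\cP^{(k)}_\ell)\ge t\binom{n-1}{k-1}+O(n^{k-2})$, and the matching upper bound $\ex_k(n,\cP^{(k)}_\ell)\le 2t\binom{n-1}{k-1}+O(n^{k-2})$, and I would treat them separately. For the lower bound I would exhibit a construction: fix $S\subseteq[n]$ with $|S|=t$ and let $\cF_S$ consist of all $k$-subsets of $[n]$ that meet $S$. In a loose path $F_1,\dots,F_\ell$ the members with odd index are pairwise disjoint and there are $\lceil\ell/2\rceil=t+1$ of them, so a copy of $\cP^{(k)}_\ell$ inside $\cF_S$ would contain $t+1$ pairwise disjoint sets, each needing a private vertex of $S$ — impossible. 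Hence $\cF_S$ is $\cP^{(k)}_\ell$-free, and $|\cF_S|=\binom{n}{k}-\binom{n-t}{k}=\sum_{i=1}^{t}\binom{n-i}{k-1}=t\binom{n-1}{k-1}+O(n^{k-2})$; for $\ell=3$ this is the star and gives the lower half of the first claim.

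\emph{Upper bound for $\ell=3$.} Let $\cF\subseteq\binom{[n]}{k}$ be $\cP^{(k)}_3$-free with $n\ge 2k$. If $\cF$ is intersecting, the Erd\H{o}s--Ko--Rado theorem gives $|\cF|\le\binom{n-1}{k-1}$ with the star as the unique extremal family, and we are done; otherwise I would fix disjoint $A,B\in\cF$. Since $\{A,B\}$ is a disjoint pair, $\cP^{(k)}_3$-freeness forces every edge to avoid $A$ or to avoid $B$. Writing $\cD=\{F\in\cF:F\cap A=\emptyset\}$, one checks, by displaying $A$ as the middle, respectively an end, of a forbidden loose path of length $3$, that $\cF\setminus\cD$ is intersecting and that every member of $\cF\setminus\cD$ is disjoint from every member of $\cD$. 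Hence $\cD$ and $\cF\setminus\cD$ occupy complementary vertex sets $U$ and $[n]\setminus U$ with $B\subseteq U$ and $A\subseteq[n]\setminus U$, so $k\le|U|\le n-k$, and $\cD$ is $\cP^{(k)}_3$-free. Bounding $|\cD|$ by induction on $n$ (with the base fact that the complete $k$-graph on fewer than $2k$ vertices is $\cP^{(k)}_3$-free) and $|\cF\setminus\cD|$ by Erd\H{o}s--Ko--Rado, $|\cF|$ is at most a function of $u=|U|$ that is convex and symmetric under $u\mapsto n-u$ on $[k,n-k]$, hence maximised at $u=k$, where its value is $1+\binom{n-k-1}{k-1}\le\binom{n-1}{k-1}$; the degenerate cases in which $u$ or $n-u$ is near $2k$ are immediate, and equality throughout forces $\cF$ to be a star.

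\emph{Upper bound for general $\ell$.} This is the substantive part, and I would prove it with the delta-system method of Frankl and F\"uredi. First, pass to a homogeneous subfamily $\cF'\subseteq\cF$ with $|\cF'|\ge|\cF|/c(k,\ell)$: the intersection pattern of any bounded collection of edges of $\cF'$ is governed by a fixed set of kernel shapes, each realised by arbitrarily large sunflowers in $\cF'$. The heart of the matter is then a combinatorial assembly lemma, namely that \emph{a homogeneous, $\cP^{(k)}_\ell$-free family is covered, apart from $O(n^{k-2})$ edges, by at most $2t$ vertices}; granting it, the uncovered edges contribute $O(n^{k-2})$ and the rest at most $2t\binom{n-1}{k-1}$, while the factor $c(k,\ell)$ only perturbs lower-order terms (equivalently, the extremal density is attained on a homogeneous family), so $|\cF|\le 2t\binom{n-1}{k-1}+O(n^{k-2})$. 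The even case $\ell=2t+2$ runs the same way, the single extra edge needed to promote a $\cP^{(k)}_{2t+1}$ to a $\cP^{(k)}_{2t+2}$ only affecting the $O(n^{k-2})$ term. The main obstacle is precisely this assembly lemma: if a homogeneous family has no cover by $2t$ vertices, then its kernel structure is rich enough to thread a loose path of $\ell$ edges — choosing the $t+1$ matching edges among sunflowers with large kernel-degree and routing the $t$ connectors through the remaining freedom — and the delicate point is controlling the interaction of each connector with the already-chosen matching edges inside the rigid homogeneous framework.
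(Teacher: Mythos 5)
Two of your three pieces are sound. The lower-bound construction (all $k$-sets meeting a fixed $t$-set, killed by the $t+1$ pairwise disjoint odd-indexed edges of a loose path) is correct, and your $\ell=3$ argument — splitting around a disjoint pair $A,B$, checking that the edges meeting $A$ form an intersecting family that is cross-disjoint from $\cD=\{F:F\cap A=\emptyset\}$, then EKR plus induction — is a legitimate elementary proof, modulo the boundary case analysis for $u<2k$ or $n-u<2k$ that you only gesture at. Note, for calibration, that the paper itself does not prove this statement: it is quoted from Mubayi--Verstra\"ete \cite{MV1}; the comparison below is with the machinery the paper develops for the analogous (linear-path) bounds in Sections 3--4.

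The general-$\ell$ upper bound, however, has a genuine gap in two places. First, a single application of the delta-system lemma gives a homogeneous $\cF'\subseteq\cF$ with $|\cF'|\geq c(k,\ell)|\cF|$, and bounding $|\cF'|$ then only yields $|\cF|\leq \frac{1}{c(k,\ell)}\bigl(2t\binom{n-1}{k-1}+O(n^{k-2})\bigr)$: the loss factor hits the \emph{main} term. Your parenthetical claim that this "only perturbs lower-order terms (equivalently, the extremal density is attained on a homogeneous family)" is unjustified and is precisely what fails: homogeneity is not inherited by unions, and an extremal family need not contain a homogeneous subfamily of comparable density. The standard repair — and what this paper does (Theorem~\ref{partition} and Section~4) — is to iterate the extraction to obtain a canonical partition $\cG_1,\ldots,\cG_m,\cF_0$ in which only the final rank-$\leq k-2$ remainder $\cF_0$, of size $O(n^{k-2})$, pays the $1/c(k,s)$ factor; pieces of rank $k$ or rank $k-1$ type $2$ are excluded by the tree-embedding theorem, and the union of the type-$1$ pieces is then bounded \emph{globally} via the kernel graph and the Erd\H{o}s--Gallai circumference bound (Lemmas~\ref{kernel-graph}, \ref{kernel-bound}, \ref{circumference}), which even yields coefficient $t$ rather than $2t$. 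Second, your "assembly lemma" (a homogeneous $\cP^{(k)}_\ell$-free family is covered, up to $O(n^{k-2})$ edges, by $2t$ vertices) is asserted, not proved — you identify it yourself as the main obstacle — and even granted it would only control one homogeneous piece at a time, not the whole family. So the substantive half of the theorem remains unproved as written.
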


An even more restrictive notion than that of a loose path is the notion of a linear path. A  
{\it linear path}  of length $\ell$ is a family of sets $\{F_1,\ldots, F_\ell\}$ such that $|F_i\cap F_{i+1}|=1$ for each $i$
 and $F_i\cap F_j=\emptyset$ whenever $|i-j|>1$.
Let $\pkl$ denote the $k$-uniform linear path of length $\ell$.  It is unique up to isomorphisms.
The determination of $\ex_k(n,\pkl)$ is nontrivial
even for $\ell=2$. This was solved by Frankl~\cite{frankl1}
(see~\cite{KMW} for more on the $k=4$ case).
The case $\ell<k$ was asymptotically determined in~\cite{frankl-furedi}.
As the main result of this paper, we determine $\ex_k(n, \pkl)$ exactly,
for {\bf all} fixed $k,\ell$, where $k\geq 4$, and sufficiently large $n$.

\begin{theorem} \label{main} {\bf(Main result)}
Let $k,t$ be positive integers,  
    $k\geq 4$. For sufficiently large $n$, we have
$$\ex_k(n,\podd)={n-1\choose k-1}+{n-2\choose k-1}+\ldots+{n-t\choose k-1}.$$
The only extremal family consists of all
the $k$-sets in $[n]$ that meet some fixed set $S$ of $t$ elements. Also,
$$\ex(n,\peven)={n-1\choose k-1}+{n-2\choose k-1}+\ldots+{n-t\choose k-1}+{n-t-2\choose k-2}.$$
The only extremal family consists of all
the $k$-sets in $[n]$ that meet some fixed set $S$ of $t$ elements plus all the $k$-sets in $[n]\setminus S$
that contain some two fixed elements. 
\end{theorem}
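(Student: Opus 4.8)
The plan is to prove both parts by induction on $t$, using the delta-system method to control the structure of a $\pkl$-free family $\cF$ on $n$ vertices with $|\cF|$ at least the claimed bound. First I would set up the standard machinery: by the Sunflower Lemma (with $\ell$ petals) applied within $\cF$, there is a constant $C=C(k,\ell)$ so that after deleting fewer than $C$ edges through each fixed set, one may assume every set of size $<k$ is either the core of a large sunflower in $\cF$ or lies in boundedly many members. Call a subset $D\subseteq[n]$ \emph{large} (or a ``kernel'') if it is the core of an $\ell$-petal sunflower in $\cF$; the key combinatorial fact to extract is that if $D_1,D_2,\dots$ are large sets whose pairwise ``link'' structure is suitably generic, one can greedily splice the corresponding sunflower petals into a linear path $\pkl$. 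Thus a $\pkl$-free family cannot contain too rich a collection of large sets, and in particular there is a bounded-size set of vertices that ``captures'' all the large sets of size $\leq k-2$ or so.

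The heart of the argument is a \emph{cleaning/stability step}: I would show that for $n$ large, a near-extremal $\pkl$-free $\cF$ must have a vertex $v$ of very high degree, in fact $\deg_\cF(v)\geq {n-2\choose k-1}+ \cdots$ close to ${n-1\choose k-1}$. Indeed, if every vertex had degree noticeably below ${n-1\choose k-1}$, a double-counting/shifting argument combined with the sunflower analysis would force enough disjointly-splittable structure to build $\podd$ (resp. $\peven$), contradicting $\pkl$-freeness; this is essentially where the Erd\H{o}s matching theorem (Theorem~\ref{matching}) and the Frankl/Frankl--F\"uredi results on $\ex_k(n,P^{(k)}_2)$ get invoked as base cases. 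Having found the high-degree vertex $v$, let $\cF_v=\{F\setminus\{v\}: v\in F\in\cF\}$ and $\cF'=\{F\in\cF: v\notin F\}$. The crucial observation is that $\cF'$ cannot contain a linear path $P^{(k)}_{\ell-2}$ that is disjoint from $v$ and whose two endpoints each have a private vertex to which an edge of $\cF_v$ reaches --- more precisely, $\cF'$ restricted to $[n]\setminus(\{v\}\cup(\text{bounded junk}))$ must itself be $\podd[\ell-2]$-free (when $\ell$ is odd) resp. $\peven[\ell-2]$-free (when $\ell$ is even), because a copy of $P^{(k)}_{\ell-2}$ in $\cF'$ could be extended on both ends through $v$ using two edges of $\cF$ containing $v$ (here one uses that $v$ has near-maximal degree so the required disjoint extension edges exist).

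Then I would close the induction: $|\cF| = \deg_\cF(v) + |\cF'| \leq {n-1\choose k-1} + \ex_k(n-1, P^{(k)}_{\ell-2})$, and plugging in the inductive hypothesis for $\ell-2$ (noting $\fl{((\ell-2)-1)/2}=t-1$) gives exactly ${n-1\choose k-1}+\cdots+{n-t\choose k-1}$ in the odd case, and the analogous expression with the extra ${n-t-2\choose k-2}$ term in the even case. For the \emph{uniqueness} of the extremal family, one tracks equality throughout: equality in $\deg_\cF(v)\leq{n-1\choose k-1}$ forces \emph{all} $k$-sets through $v$ to be present, and equality in the inductive bound for $\cF'$ forces $\cF'$ to be the (unique, by induction) extremal $P^{(k)}_{\ell-2}$-free family on $[n]\setminus\{v\}$; one must also check that no edge of $\cF'$ can be ``wasted'' outside the inductive extremal configuration, which again uses the extension-through-$v$ trick. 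The base cases $t=1$ (i.e. $\ell\in\{2,3,4\}$) are $\ex_k(n,P^{(k)}_2)$ (Frankl) and $\ex_k(n,P^{(k)}_3)={n-1\choose k-1}$ (Theorem~\ref{mv}), together with a direct analysis of $P^{(k)}_4$ yielding the ${n-1\choose k-1}+{n-3\choose k-2}$ bound.

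The main obstacle I anticipate is the cleaning step that produces the high-degree vertex and, relatedly, making the ``extend a $P^{(k)}_{\ell-2}$ through $v$'' argument fully rigorous: one needs to guarantee that after removing $v$ the residual family $\cF'$ is genuinely forbidden a \emph{shorter} linear path \emph{on a slightly restricted vertex set}, which requires carefully quarantining the $O(1)$ ``bad'' sets coming from the sunflower analysis and verifying that the two extension edges through $v$ can always be chosen disjoint from the $P^{(k)}_{\ell-2}$ and from each other --- this is exactly where the near-maximality of $\deg_\cF(v)$, rather than merely $\deg_\cF(v)\geq {n-1\choose k-1}-o(n^{k-1})$, must be bootstrapped, and it is the most delicate part of the whole scheme. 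The requirement $k\geq 4$ presumably enters precisely here (and in the $P^{(k)}_2$ input), since for $k=3$ the delta-system control on links of pairs is weaker.
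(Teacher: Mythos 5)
Your proposed induction on $t$ hinges on the reduction
$\ex_k(n,\mathbb{P}^{(k)}_{\ell})\leq \binom{n-1}{k-1}+\ex_k(n-1,\mathbb{P}^{(k)}_{\ell-2})$,
justified by the claim that a copy of $\mathbb{P}^{(k)}_{\ell-2}$ in $\cF'=\{F\in\cF: v\notin F\}$ ``could be extended on both ends through $v$ using two edges of $\cF$ containing $v$.'' This step is broken: in a linear path of length $\ell\geq 3$ the first and last edges must be \emph{disjoint}, whereas your two extension edges both contain $v$ and hence intersect. A single high-degree vertex can lengthen a linear path by at most one edge, so deleting one vertex only lets you forbid $\mathbb{P}^{(k)}_{\ell-1}$ in the remainder, which gives a bound of order $(\ell-1)\binom{n-1}{k-1}$ rather than $t\binom{n-1}{k-1}$. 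To gain two units of length per ``peeled'' vertex you need the two extension edges to hinge at \emph{different} vertices, and this is precisely why the paper does not peel one vertex at a time: it first shows (via the homogeneous decomposition, the kernel graph, and Erd\H{o}s--Gallai on circumference, then a convexity argument) that there are $t$ vertices $S=\{x_1,\dots,x_t\}$ whose kernel degrees to a huge set $W$ are $n-O(\sqrt n)$, and then extends configurations using paths in the kernel graph alternating between $S$ and $W$, i.e.\ using kernel degrees of \emph{pairs} $\{x,y\}$, so the two ends of the path are grown at distinct vertices. The exact bound is then obtained not by induction but by comparing $\cF$ with the full star of $S$, trading the ``missing'' edges $\cD$ against the few edges avoiding $S$ (with the Erd\H{o}s--Ko--Rado theorem handling the $2$-intersecting family that produces the extra $\binom{n-t-2}{k-2}$ term in the even case).

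Two further gaps: (i) your base case $\ell=3$ is not covered by Theorem~\ref{mv}, since Mubayi--Verstra\"ete bound the Tur\'an number of the \emph{family} of loose paths, and forbidding only the linear path is a weaker restriction, so $\ex_k(n,\mathbb{P}^{(k)}_3)\geq \ex_k(n,\cP^{(k)}_3)$ and no upper bound follows; similarly the $\ell=4$ base (``direct analysis'') is itself a nontrivial instance of the theorem and cannot simply be asserted. (ii) The existence of a vertex of degree close to $\binom{n-1}{k-1}$ in a near-extremal family is asserted via an unspecified ``double-counting/shifting argument''; this is exactly the content of the paper's Claims~2--3 and needs the quantitative kernel-graph machinery (or an equivalent) to be made rigorous, including the fact that kernel degree (many pairwise disjoint petals) rather than mere degree is what allows the splicing of petals into a path. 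As it stands the proposal does not yield the theorem.
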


Our method does not quite work for  the $k=3$ case. We conjecture that a similar result holds for $k=3$.
Using essentially the same method (for $k\geq 4$) and a slight modification of the method (for $k=3$), one
can also determine the Tur\'an numbers of loose paths for all fixed $k\geq 3$ and large $n$.

\begin{theorem} \label{loose-paths} \label{Jiang-Seiver}
Let $k,t$ be positive integers, where $k\geq 3$.    
For sufficiently large $n$, we have
$$\ex_k(n,\cP^{(k)}_{2t+1})={n-1\choose k-1}+{n-2\choose k-1}+\ldots+{n-t\choose k-1}.$$
The only extremal family consists of all
the $k$-sets in $[n]$ that meet some fixed set $S$ of $t$ vertices. Also,
$$\ex(n,\cP^{(k)}_{2t+2})={n-1\choose k-1}+{n-2\choose k-1}+\ldots+{n-t\choose k-1}+1.$$
The only extremal family consists of all the $k$-sets in $[n]$ that meet some fixed set $S$ of $t$ vertices
plus one additional $k$-set that is disjoint from $S$.
\end{theorem}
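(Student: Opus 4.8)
\noindent{\it Proof plan.}
The lower bounds are the two star-type constructions in the statement. The only fact needed is that in a loose path $\{F_1,\dots,F_\ell\}$ every vertex lies in at most two, necessarily consecutive, edges: if $v\in F_i\cap F_j$ with $i<j$ then $|i-j|=1$, and any further edge $F_h\ni v$ satisfies $h\in\{i-1,i,i+1\}\cap\{i,i+1,i+2\}=\{i,i+1\}$. Hence a fixed $t$-set $S$ meets at most $2t$ edges of a loose path, so the family $\cG$ of all $k$-subsets of $[n]$ meeting $S$ contains no $\cP^{(k)}_{2t+1}$, and $\cG$ together with any one $k$-set disjoint from $S$ contains no $\cP^{(k)}_{2t+2}$. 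Since $|\cG|=\binom{n}{k}-\binom{n-t}{k}=\sum_{i=1}^{t}\binom{n-i}{k-1}$, this gives both lower bounds.

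For the upper bounds, observe that a linear path of length $\ell$ is in particular a loose path of length $\ell$, so a $\cP^{(k)}_\ell$-free family is also $\pkl$-free. For $k\ge 4$ this lets Theorem~\ref{main} settle the odd case outright: $\ex_k(n,\cP^{(k)}_{2t+1})\le\ex_k(n,\podd)=\sum_{i=1}^{t}\binom{n-i}{k-1}$, and since the unique extremal $\podd$-free family, the star on a $t$-set, is itself $\cP^{(k)}_{2t+1}$-free, uniqueness transfers as well. What remains is (a) the even case for all $k\ge 3$ and (b) the odd case for $k=3$; for these we run the proof of Theorem~\ref{main}, sketched next.

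The argument is an induction on $t$, establishing $E_t$ (the $\cP^{(k)}_{2t+2}$ statement, all $k\ge 3$) together with $O_t$ (the $\cP^{(k)}_{2t+1}$ statement, $k=3$); the bases $O_1$ (Theorem~\ref{mv}) and $E_1$ are handled separately. Given a $\cP^{(k)}_{2t+2}$-free (resp.\ $\cP^{(k)}_{2t+1}$-free) $\cF\subseteq\binom{[n]}{k}$, first apply the Frankl--F\"uredi delta-system machinery with a large threshold $c=c(k,t)$: up to an $O(n^{k-2})$ change, $\cF$ is partitioned by ``kernels'', each surviving edge $F$ carrying a kernel $K(F)\subsetneq F$ that is the core of a sunflower of $\cF$ with $\ge c$ petals, so that within a kernel class one can greedily choose petals avoiding any bounded vertex set or meeting it in a prescribed single vertex. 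A \emph{path-building} step then shows that the set $W$ of ``heavy'' vertices --- cores of sunflowers governing $\Omega(n^{k-2})$ edges --- satisfies $|W|\le t$: if $|W|\ge t+1$, take two edges through each of $t+1$ heavy stars and chain them into a loose path, at each step selecting the next edge from the appropriate sunflower so that it is disjoint from all previously used vertices except for an intended single-vertex overlap with its predecessor, producing a forbidden $\cP^{(k)}_{2t+2}$ (resp.\ $\cP^{(k)}_{2t+1}$). This already yields the stability statement $|\cF|\le\sum_{i=1}^{t}\binom{n-i}{k-1}+O(n^{k-2})$ with $\cF$ within $O(n^{k-2})$ of a star on $S:=W$. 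To upgrade to the exact bound, delete a heaviest vertex $v\in S$ and show that $\{F\in\cF:v\notin F\}$ is $\cP^{(k)}_{2t}$-free (resp.\ $\cP^{(k)}_{2t-1}$-free) --- any loose path of that length avoiding $v$ would extend through two edges at one end supplied by the sunflower around $v$ --- then apply $E_{t-1}$ (resp.\ $O_{t-1}$); since $\deg_\cF(v)\le\binom{n-1}{k-1}$, adding up gives the exact bound, and chasing equality forces $\deg_\cF(v)=\binom{n-1}{k-1}$ and the deleted family to be extremal, which by induction gives the nested-star structure and, in the even case, the one extra edge disjoint from $S$.

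The step I expect to be the main obstacle is the path-building and extension arguments in the exact regime: one must maintain control of the bounded set of already-used vertices, use the sunflower richness from the cleanup to place each successive edge --- in particular to guarantee that a chosen path endpoint is \emph{reachable} from a heavy vertex rather than accidentally disjoint from its whole star --- and keep the bookkeeping tight enough to pass from the $O(n^{k-2})$-approximate bound to the exact value with uniqueness. A secondary difficulty is $k=3$: a kernel then has size $0,1$ or $2$ and a petal of a $1$-kernel sunflower contributes only two new vertices, so the rerouting has little slack; the remedy is to treat heavy $2$-element kernels on the same footing as heavy vertices and to exploit that loose paths, unlike linear paths, tolerate large consecutive overlaps --- which is exactly why the $k=3$ case goes through here although, as remarked after Theorem~\ref{main}, it does not for $\pkl$.
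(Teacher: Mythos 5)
Your lower bounds and the reduction of the odd case for $k\geq 4$ to Theorem~\ref{main} are fine and match what the paper does (it notes exactly this implication at the end of Section 5). The problem is the route you take for the remaining cases. Your plan replaces the paper's direct structural counting by an induction on $t$ whose key step is: delete a heavy vertex $v$ and claim that $\{F\in\cF: v\notin F\}$ is $\cP^{(k)}_{2t}$-free (resp.\ $\cP^{(k)}_{2t-1}$-free), ``since any such loose path would extend through two edges at one end supplied by the sunflower around $v$.'' This step is not justified and, as stated, is false: the petals of a sunflower with kernel $\{v\}$ are pairwise disjoint outside $v$ and need not meet the end edge of a loose path that avoids $v$, so there is in general no way to append edges through $v$ to that path. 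You flag this ``reachability'' issue yourself as the main obstacle, but you do not resolve it, and it is precisely the point where the whole induction (and hence the exact bound and uniqueness) would collapse. A secondary gap: the base case $E_1$ (the exact value ${n-1\choose k-1}+1$ for loose paths of length $4$) is not covered by Theorem~\ref{mv} or anything you cite, and ``handled separately'' is not an argument; likewise your one-sentence remedy for $k=3$ is far from a proof (the paper itself says the $k=3$ argument must be modified and defers it to \cite{JS}).

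By contrast, the paper's argument (for $k\geq 4$) has no induction on $t$ and no vertex deletion. The delta-system machinery is used to produce not just $t$ heavy vertices $S=\{x_1,\dots,x_t\}$ but a large set $W$ with $|W|\geq n-O(n^{1/2})$ such that \emph{every} pair $\{x,w\}$ with $x\in S$, $w\in W$ has kernel degree at least $s$ in $\cF$; this complete bipartite connectivity in the kernel graph $L$ is exactly what guarantees that any edge of $\cF$ touching $W$ can be chained through $S$ and $W$ into a long path, i.e.\ it supplies the reachability your sketch lacks. The exact bound then follows by a direct comparison with the star family: one shows every member of $\cF$ disjoint from $S$ lies in the small set $Z=[n]\setminus(S\cup W)$ (or, in the even case, bounds the families $\cF_2,\cF_3$ of such members), and offsets these gains against the deficiency family $\cD$ of missing edges meeting $S$, which has size $\Omega(n_1\cdot n^{k-2})$; for loose paths the only change is that $\cF_2$ is replaced by ``at most one set disjoint from $S$ inside $W$,'' giving the $+1$. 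If you want to salvage your plan, you would have to import this $S$--$W$ kernel-graph structure before the deletion step, at which point you are essentially redoing the paper's proof and the induction buys you nothing.
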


Since Theorem \ref{loose-paths} is not our main result and for $k\geq 4$ the proof is essentially the same
as that of Theorem \ref{main}, we will not formally prove Theorem \ref{loose-paths}. We will instead just
briefly comment on how to prove Theorem \ref{loose-paths} at the end of Section 5. For details, see \cite{JS}.

   It is easy to see that the constructions  described in the above two theorems are indeed
$\pkl$ and $\cP^{(k)}_{\ell}$-free, respectively.
We will show that for large enough $n$
 they are the unique extremal constructions for the respective Tur\'an numbers.

We organize our paper as follows.
In Section 3, we introduce our main tool: the delta-system method and develop some useful facts.
In Section 4, we establish asymptotically tight bounds. In Section 5, we prove the exact bounds,
characterize the extremal families and establish stability results.  
In Section 6, we prove a related result.
In Section 7 we collect  a few problems and remarks.


\section{The delta-system method and homogeneous families}

The {\em delta-system method},  started by  Deza, Erd\H os and Frankl~\cite{DEF} and others,
  is a powerful tool for solving set system problems.
The method is   
 summarized in a
 structural lemma obtained by the first author~\cite{furedi-1983} (see Lemma \ref{homogeneous} below).
 It has been used successfully to obtain a series
 of sharp results on set systems,  most notable in~\cite{frankl-furedi}, and more recently in~\cite{lale-furedi}.

We now introduce a few definitions.
A family of sets $F_1,\ldots, F_s$ are said to form an {\it $s$-star}  or {\it $\Delta$-system} of size $s$ with {\it kernel}
 $A$ if $F_i\cap F_j=A$ for all $1\leq i<j \leq s$.
Sets $F_1,\ldots ,F_s$ are called the {\it petals}  (or {\it members}) of the $\Delta$-system.
 Given a family
$\cF$ of sets and a member $F$ of $\cF$, we define the {\it intersection structure} of $F$ relative to
$\cF$ to be
$$\cI(F,\cF)=\{F\cap F': F'\in \cF, F'\neq F\}.$$
In other words, $\cI(F,\cF)$ consists of all the intersections of $F$ with other members of $\cF$.
As in many $k$-uniform hypergraph problems, it is often convenient to assume the family $\cF$ to be $k$-partite.
A $k$-uniform family $\cF\subseteq {[n]\choose k}$ is {\it $k$-partite} if there exists a partition of the vertex
set $[n]$ into $k$ sets $X_1,\ldots, X_k$, called {\it parts}, such that $\forall F\in \cF$ and $\forall i\in [k]$ we have
$|F\cap X_i|=1$. So, each member of  $\cF$ consists of one vertex
from each part.
We will call $(X_1,\ldots, X_k)$ a (vertex) {\em $k$-partition} of $\cF$.
 Recall that an old result of Erd\H{o}s and Kleitman~\cite{EK} showed that every $k$-uniform family
$\cH$ contains a $k$-partite subfamily $\cH' \subseteq \cH$ of size at least $(k!/k^k)|\cH|$.

Let $\cF\subseteq {[n]\choose k}$ be a $k$-partite family with a  $k$-partition $(X_1,\ldots, X_k)$. Given
any subset $S\subseteq [n]$, its {\it  pattern}, denoted by $\Pi(S)$, is defined as
$$\Pi(S)=\{i: S\cap X\neq \emptyset\} \subseteq [k].$$
In other words, the pattern of $S$ records which parts in the given $k$-partition that $S$ meets.
If $\cL$ is a collection of subsets of $[n]$, then we define
$$\Pi(\cL)=\{\Pi(S): S\in \cL\} \subseteq 2^{[k]}.$$
We will call $\Pi(\cI(F,\cF))$ the {\it intersection pattern} of $F$ relative to $\cF$.

\begin{lemma}{\bf (The intersection semilattice lemma~\cite{furedi-1983})} \label{homogeneous}
For any positive integers $s$ and $k$, there exists a positive constant $c(k,s)$ such that
{\bf every}
family $\cF\subseteq {[n]\choose k}$ contains a subfamily $\cF^*\subseteq \cF$ satisfying
\begin{enumerate}
\item $|\cF^*|\geq c(k,s)|\cF|$.
\item $\cF^*$ is $k$-partite, together with a $k$-partition $(X_1,\ldots, X_k)$.
\item There exists a family $\cJ$ of proper subsets of $[k]$ such that $\Pi(\cI(F,\cF^*))=\cJ$ holds for
all $F\in \cF^*$.
\item $\cJ$ is closed under intersection, i.e., for all $A,B\in \cJ$ we have $A\cap B\in \cJ$ as well.
\item Fixing any $F\in \cF^*$, for each $A\in \cI(F,\cF^*)$ there exists an $s$-star in $\cF^*$ containing $F$
    with kernel $A$.
\end{enumerate}
\end{lemma}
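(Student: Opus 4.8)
\noindent The plan is to combine three classical ingredients: the Erd\H{o}s-Kleitman $k$-partition trick quoted just above, the Erd\H{o}s-Rado sunflower ($\Delta$-system) lemma, and an iterated pigeonhole that stabilises the intersection patterns. First I would apply Erd\H{o}s-Kleitman to pass to a $k$-partite subfamily $\cG\subseteq\cF$ with $|\cG|\ge (k!/k^k)|\cF|$, and fix its $k$-partition $(X_1,\ldots,X_k)$ once and for all; every subfamily we pass to inherits this partition, so the patterns $\Pi(\cdot)$ remain well defined. Since two distinct members of a $k$-uniform family meet in fewer than $k$ vertices, every set in $\cI(F,\cdot)$ automatically has a proper pattern, so $[k]$ never occurs and property~4 is only ever about proper subsets. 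It then suffices to show that an arbitrary $k$-partite $\cG$ contains a subfamily $\cG^\ast$ with $|\cG^\ast|\ge c'(k,s)|\cG|$ satisfying 3--5; then $c(k,s)=(k!/k^k)\,c'(k,s)$ works.

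For property~3 I would run a stabilisation loop. There are fewer than $2^{2^k}$ possible values of the pattern family $\Pi(\cI(F,\cG))$, so one pigeonhole step yields a subfamily on which $\Pi(\cI(\cdot,\cG))$ is a single set $\cJ$, at the cost of a factor $2^{-2^k}$; the point is that passing to a subfamily can only \emph{shrink} each member's intersection pattern. Repeating the step inside the new family, the constant pattern is non-increasing from round to round, so after at most $2^k+1$ rounds it reaches a fixed point, and the last family $\cG_1$ is homogeneous with some pattern $\cJ$ and density at least $c_1(k):=(2^{-2^k})^{2^k+1}$ inside $\cG$.

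Properties~4 and~5 I would install together, by induction on $|\cJ|$ (which is at most $2^k-1$), using the Erd\H{o}s-Rado bound that more than $r!(s-1)^r$ distinct $r$-sets contain a sunflower with $s$ petals. Work in the homogeneous $\cG_1$; for $F\in\cG_1$ and $P\in\cJ$ there is a unique $A=A_P(F)\subseteq F$ with $\Pi(A)=P$, and $A\in\cI(F,\cG_1)$. Perform a cleaning step: delete every member $F$ one of whose classes $\{F'\in\cG_1:F'\cap F=A_P(F)\}$ has size below a threshold $T=T(k,s)$ exceeding all the sunflower bounds we need. A double counting of the pairs $(F,F')$ with a prescribed intersection pattern shows that either $\cG_1$ still has density bounded below in terms of $k$ and $s$ after cleaning, or else some pattern $P$ was carried by only a tiny fraction of the members, in which case we discard $P$ from $\cJ$ and recurse with $|\cJ|$ strictly smaller. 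Once every surviving $F$ has all its classes of size $\ge T$, I would extract from the class of $A=A_P(F)$ an $s$-star with kernel \emph{exactly} $A$ through $F$ by a downward induction on $k-|A|$: for $|A|=k-1$ any $s-1$ class members together with $F$ form such a star automatically (their pairwise intersections cannot exceed $A$); for smaller $|A|$, apply the sunflower lemma to the sets $F'\setminus A$ over the class, and either the sunflower kernel is empty, giving the desired star, or it is some $C\neq\emptyset$, producing a strictly larger common intersection $A\cup C$ that lies in $\cI(\cdot,\cG_1)$ by homogeneity and to which the inductive hypothesis applies. The same mechanism gives property~4: from $s$-stars through $F$ with kernels $A$ and $B$ (take $s$ large), chasing the intersections of their petals produces a member meeting $F$ in exactly $A\cap B$, so $\Pi(A\cap B)\in\cJ$.

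The step I expect to be the real obstacle is the density bookkeeping across the cleaning and the induction on $|\cJ|$: one has to choose $T$ and organise the double counting so that each of the at most $2^k$ recursive layers loses only a factor depending on $k$ and $s$, keeping the final constant $c'(k,s)$ --- a bounded product of the Erd\H{o}s-Kleitman factor, the $2^{-2^k}$ pigeonhole factors, and the cleaning and sunflower factors --- positive. The kernel-exactness point, handled by the downward induction, is the other delicate spot; the remainder is a fairly mechanical combination of pigeonhole and the sunflower lemma.
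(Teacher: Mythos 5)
The paper itself does not prove this lemma (it imports it from \cite{furedi-1983}), so your proposal has to stand on its own, and as written it has genuine gaps at exactly the two places you flag as delicate. First, the stabilisation loop: in round $i$ you keep $\cG_{i+1}=\{F\in\cG_i:\Pi(\cI(F,\cG_i))=\cJ_{i+1}\}$, and while the selected patterns do form a non-increasing chain $\cJ_1\supseteq\cJ_2\supseteq\cdots$, the event $\cJ_{i+1}=\cJ_i$ is \emph{not} a fixed point: property~3 requires each member's pattern relative to the \emph{current} family, and members of $\cG_{i+1}$ can lose pattern elements whenever the witnesses of their intersections were among the deleted members. The process genuinely stops only when no member is removed at all, and rounds in which $\cJ$ is unchanged but the family strictly shrinks are not counted by your ``at most $2^k$ strict decreases'' bound; a chain-like family whose pattern witnesses erode one layer per round shows the number of rounds can be unbounded in $n$, so the claimed density $\bigl(2^{-2^k}\bigr)^{2^k+1}$ is not established. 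This cascading-deletion problem is precisely why the actual argument cannot homogenise by naive iteration and must secure the star property (which makes patterns robust under the removal of few members) in tandem with the selection.

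Second, kernel exactness and the cleaning recursion. Largeness of the class $\{F'\in\cG_1: F'\cap F=A\}$ together with Erd\H{o}s--Rado only yields an $s$-sunflower whose kernel is some $A\cup C$ with $C$ possibly nonempty; your proposed recursion ``apply the inductive hypothesis to $A\cup C$'' produces stars with strictly \emph{larger} kernels through other members and never returns the object actually required, an $s$-star with kernel exactly $A$ containing $F$, so the case $C\neq\emptyset$ is unresolved. The standard repair compares $\deg(A)$ with the degrees of its proper supersets via a hierarchy of thresholds $t_{k-1}\gg t_{k-2}\gg\cdots$ (if $\deg(A)$ exceeds its threshold while every $\deg(A\cup\{x\})$ is below the next one, a greedy choice of petals gives an $s$-star with kernel exactly $A$), and one must in addition guarantee that these petals survive inside the final $\cF^*$, with only a constant-factor loss overall. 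That, together with your cleaning step (``discard $P$ from $\cJ$ and recurse''), is exactly where the work lies: deleting members or a pattern changes the intersection structures of the survivors, so property~3 has to be re-established, which reopens the first gap, and the density bookkeeping across these layers is left undone --- you say so yourself. Your sketch for property~4 is fine in spirit (and, as the paper notes, item~4 follows from items~3 and~5 once $s\geq k$), but items 3 and 5 themselves are not yet proved by the proposal; it is a plan whose hard steps coincide with the content of \cite{furedi-1983}.
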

Note that for $s\geq k$, item 4 follows from items 3 and  5. For $s<k$, observe that if all items hold for $s=k$, then they certainly
also hold for all $s<k$.
\begin{definition}
{\rm We call family $\cF^*$ that satisfies items (2)-(5) of Lemma~\ref{homogeneous} a {\it $(k,s)$-homogeneous family} with
intersection pattern $\cJ$.   When the context is clear we will drop the $(k,s)$-prefix.
}
\end{definition}

A useful notion in the delta-system method is the notion of a rank of family. Given a family $\cL$ of
subsets of $[k]$, we define the {\it rank} of $\cL$, denoted by $r(\cL)$ as
$$r(\cL)=\min\{|D|: D\subseteq [k], \not\exists B\in \cL, D\subseteq B\}.$$
So, $r(\cL)$ is the cardinality of a smallest set $D$ that ``obstructs'' $\cL$ in the sense that
no member of $\cL$ contains it. We will   apply the rank notion to the intersection pattern $\cJ\subsetneq 2^{[k]}$.
If $\cF$ is a $k$-partite family with a $k$-partition $(X_1,\ldots, X_k)$,
$F\in \cF$ and $D\subseteq [k]$, we will let $F[D]=F\cap(\bigcup_{i\in D} X_i)$. That is, $F[D]$ is the projection of $F$ onto
the parts whose indices are in $D$. Given a family $\cF\subseteq {[n]\choose k}$ and a subset $W\subseteq [n]$,
we define the {\it degree} of $W$ in $\cF$ as
$$\deg_\cF(W)=|\{F: F\in \cF, W\subseteq F\}|.$$

\begin{lemma} \label{own-subset}
Let $k,s$ be positive integers. Let $\cF^*$ be a
$(k,s)$-homogeneous family with intersection pattern $\cJ$. Let
$D\subseteq [k]$. Suppose no member of $\cJ$ contains $D$. Then
$\deg_{\cF^*}(F[D])=1$.
\end{lemma}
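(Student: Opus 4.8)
The plan is to argue by contradiction: suppose some $F \in \cF^*$ has $\deg_{\cF^*}(F[D]) \geq 2$, so there is another member $F' \in \cF^*$ with $F[D] \subseteq F'$. The intersection $F \cap F'$ then contains $F[D]$, which occupies exactly the parts indexed by $D$ (since $\cF^*$ is $k$-partite). Hence the pattern $\Pi(F \cap F')$ contains $D$. But $F \cap F' \in \cI(F, \cF^*)$, so $\Pi(F \cap F') \in \Pi(\cI(F,\cF^*)) = \cJ$ by item (3) of homogeneity, giving a member of $\cJ$ that contains $D$ — contradicting the hypothesis. This shows $\deg_{\cF^*}(F[D]) \leq 1$, and since $F$ itself contains $F[D]$ we get equality.

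The one subtlety to check is that $F[D] \subseteq F'$ really does force $D \subseteq \Pi(F \cap F')$ rather than just $D \subseteq \Pi(F')$: because $\cF^*$ is $k$-partite with partition $(X_1,\ldots,X_k)$, for each $i \in D$ the set $F[D]$ contains the unique vertex of $F$ in $X_i$, and this vertex also lies in $F'$; hence it lies in $F \cap F'$, so $i \in \Pi(F \cap F')$. Thus $D \subseteq \Pi(F \cap F')$, as needed. Note also that $F' \neq F$ is exactly what makes $F \cap F'$ a legitimate element of $\cI(F,\cF^*)$, so the degree-two assumption is used precisely here.

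I do not expect any real obstacle; the statement is essentially an unwinding of the definitions of $k$-partiteness, the projection $F[D]$, the intersection structure $\cI(F,\cF^*)$, and the homogeneity condition (3). The only thing one must be careful about is the direction of the containment ($D$ sitting \emph{inside} the pattern of the intersection, forcing a member of $\cJ$ to contain $D$) versus the hypothesis (no member of $\cJ$ contains $D$); these line up, so the contradiction is immediate.
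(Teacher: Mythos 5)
Your proposal is correct and is essentially identical to the paper's proof: assume a second member $F'\supseteq F[D]$, note $\Pi(F\cap F')\supseteq D$ and $\Pi(F\cap F')\in\cJ$ by homogeneity, contradicting the hypothesis. The extra verification that $D\subseteq\Pi(F\cap F')$ (via $k$-partiteness) is a fine, if routine, elaboration of the same argument.
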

\begin{proof}
Suppose that $F'$ is another member of $\cF^*$ besides $F$ that contains $F[D]$.
Then $F\cap F'\supseteq F[D]$. Let $B=\Pi(F\cap F')$.
Then $B\supseteq D$. Since $\cF^*$ is homogeneous with intersection pattern $\cJ$,  $B\in \cJ$.
This contradicts our assumption that  no member of $\cJ$ contains $D$.
\end{proof}

Lemma~\ref{own-subset} immediately implies

\begin{proposition}{\bf (The rank bound)} \label {rank-bound}\enskip
Let $k,s$ be positive integers. Let $\cF^*$ be a
$(k,s)$-homogeneous family on $n$ vertices with intersection
pattern $\cJ$. If $r(\cJ)=p$, then $|\cF^*|\leq {n\choose p}$.
\end{proposition}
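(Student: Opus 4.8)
The plan is to count the sets $F[D]$ as $F$ ranges over $\cF^*$ and $D$ ranges over the $p$-subsets of $[k]$, where $p = r(\cJ)$. By the definition of rank, there is some $p$-subset $D_0 \subseteq [k]$ such that no member of $\cJ$ contains $D_0$. First I would fix this $D_0$. Then for each $F \in \cF^*$, consider the projection $F[D_0] = F \cap (\bigcup_{i \in D_0} X_i)$, which is a $p$-element subset of $[n]$ (it has exactly $p$ elements since $\cF^*$ is $k$-partite and $|D_0| = p$).

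The key step is to observe that the map $F \mapsto F[D_0]$ is injective on $\cF^*$. This is exactly the content of Lemma~\ref{own-subset}: since no member of $\cJ$ contains $D_0$, we have $\deg_{\cF^*}(F[D_0]) = 1$ for every $F$, meaning $F$ is the only member of $\cF^*$ containing the set $F[D_0]$. Hence if $F, F' \in \cF^*$ satisfy $F[D_0] = F'[D_0]$, then $F'$ contains $F[D_0]$, forcing $F' = F$ by the degree-one conclusion.

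Given injectivity, $|\cF^*|$ is at most the number of possible values of $F[D_0]$, which is at most the number of $p$-element subsets of $[n]$, namely $\binom{n}{p}$. This completes the argument.

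The argument is short and the only real content is recognizing that the rank $r(\cJ) = p$ supplies an obstructing set $D_0$ of size exactly $p$ to which Lemma~\ref{own-subset} applies; there is no genuine obstacle here, since Lemma~\ref{own-subset} already does the structural work. One point to be careful about is that $F[D_0]$ genuinely has $p$ elements (using $k$-partiteness) so that it is a legitimate $p$-subset of $[n]$ and the count $\binom{n}{p}$ is correct; this is immediate from item (2) of Lemma~\ref{homogeneous}.
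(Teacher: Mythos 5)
Your proof is correct and is essentially identical to the paper's: both fix an obstructing $p$-set $D$, invoke Lemma~\ref{own-subset} to see that $F\mapsto F[D]$ is injective on $\cF^*$, and conclude $|\cF^*|\leq\binom{n}{p}$ by counting $p$-subsets of $[n]$.
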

\begin{proof}
By definition,  $\exists D\subseteq [k]$ with $|D|=p$ such that no member of $\cJ$ contains $D$.
By Lemma~\ref{own-subset}, for each $F\in \cF^*$, $F[D]$ is a $p$-subset of $F$ that is not contained
   in any other member of $\cF^*$.
Suppose $F_1,\ldots, F_m$ are all the members of $\cF^*$. Then $F_1[D],F_2[D],\ldots, F_m[D]$ are
all distinct $p$-sets, and clearly there can be at most ${n\choose p}$ of them. So, $|\cF^*|=m\leq {n\choose p}$.
\end{proof}

In the spirit of Proposition~\ref{rank-bound}, we will focus on
homogeneous families whose intersection patterns $\cJ$ have  rank
$k-1$ or $k$.  Among rank $k-1$ patterns, we consider two types.

\begin{definition}
Let $\cL$ be a family of proper subsets of $[k]$ that has rank $k-1$. We say that $\cL$ is of type $1$ if
there exists an element $x\in [k]$ such that $[k]\setminus \{x\}\notin \cL$ but $\forall y\in [k], y\neq x,
[k]\setminus \{y\}\in \cL$.
   If $\cL$ has rank $k-1$, but is not of type $1$, then we say that it is of type $2$.
\end{definition}

We now prove some quick facts.

\begin{lemma}  \label{rank-facts}
Let $k\geq 3$  be a positive integer. Let $\cL$ be a family of proper subsets of $[k]$ that is closed under intersection.
\begin{enumerate}
\item If $\cL$ has rank $k$, then it consists of all the proper subsets of $[k]$.
\item If $\cL$ has rank $k-1$ and is of type 1, then for some $i\in [k]$, $\cL$ contains all the proper subsets of
$[k]$ that contain $i$. We will call $i$ the {\it central element}.
\item For $k\geq 4$, if $\cL$
has rank $k-1$ and is  of type $2$ then $\cL$ contains at least two singletons.
\end{enumerate}
\end{lemma}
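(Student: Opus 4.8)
The common engine for all three items is the translation of the rank hypothesis into a covering statement: if $r(\cL)=p$, then every subset of $[k]$ of size at most $p-1$ is contained in some member of $\cL$, and since every member is a proper subset of $[k]$ (hence has at most $k-1$ elements), such a small set is in fact either a member of $\cL$ itself or is contained in one of the maximal candidates $[k]\setminus\{x\}$. Combined with closure under intersection and the elementary set identity $S=\bigcap_{x\notin S}\bigl([k]\setminus\{x\}\bigr)$, valid for every proper subset $S\subsetneq[k]$, this lets me reconstruct large chunks of $\cL$ from a handful of its members.

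Items 1 and 2 then fall out quickly. For item 1, $r(\cL)=k$ forces every $(k-1)$-subset of $[k]$ to be covered, hence, the members being proper, to be a member; the identity above then writes every proper $S$ as an intersection of at least one such $(k-1)$-subset, so $S\in\cL$ by closure, and $\cL$ is the set of all proper subsets. For item 2, type $1$ supplies a unique $i$ with $[k]\setminus\{i\}\notin\cL$ and $[k]\setminus\{y\}\in\cL$ for every $y\neq i$; I take this $i$ to be the central element. Every proper $S$ with $i\in S$ and $|S|=k-1$ is of the form $[k]\setminus\{y\}$ with $y\neq i$, hence in $\cL$; and every proper $S$ with $i\in S$ and $|S|\le k-2$ equals $\bigcap_{x\notin S}\bigl([k]\setminus\{x\}\bigr)$, all of whose factors lie in $\cL$ because $x\neq i$, so $S\in\cL$ by closure.

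Item 3 carries the real content, and I would argue as follows. Set $P=\{c\in[k]:[k]\setminus\{c\}\in\cL\}$ and $M=[k]\setminus P$; rank $k-1$ makes some $(k-1)$-subset missing from $\cL$, so $M\neq\emptyset$, while the failure of type $1$ upgrades this to $|M|\ge 2$. The first step is to show $[k]\setminus\{a,b\}\in\cL$ for every pair $\{a,b\}\subseteq M$: this $(k-2)$-set is contained in some proper member of $\cL$, which can only be $[k]\setminus\{a,b\}$, $[k]\setminus\{a\}$ or $[k]\setminus\{b\}$, and the last two are excluded since $a,b\in M$. Intersecting such sets over all pairs inside a subset $T\subseteq M$ with $|T|\neq 1$, together with the sets $[k]\setminus\{c\}$ for $c$ in a subset $R\subseteq P$, then shows $[k]\setminus(T\cup R)\in\cL$ for all such $T$ and $R$. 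Taking $T=M$ and $R=P\setminus\{z\}$ gives $\{z\}\in\cL$ for every $z\in P$, and taking $T=M\setminus\{z\}$ and $R=P$ gives $\{z\}\in\cL$ for every $z\in M$ provided $|M|\ge 3$.

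The final dichotomy is the only real obstacle, and it is exactly where $k\ge 4$ enters. If $|M|\ge 3$, the second construction already produces at least three singletons in $\cL$. If $|M|=2$, that construction breaks down — a one-element set is not a union of pairs of elements of $M$ — so I would fall back on the first construction: when $k\ge 4$ and $|M|=2$ we have $|P|=k-2\ge 2$, so $\{z\}\in\cL$ for at least two values $z\in P$. In all cases $\cL$ contains at least two singletons. That $k\ge 4$ is genuinely needed is shown by the example $\cL=\{\emptyset,\{1,2\},\{3\}\}$ for $k=3$: it is closed under intersection, has rank $2$ and is of type $2$, yet contains only the single singleton $\{3\}$.
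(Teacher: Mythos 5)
Your proof is correct and follows essentially the same route as the paper: for item 3 you identify the set $M$ of obstructed $(k-1)$-complements, show $[k]\setminus\{a,b\}\in\cL$ for pairs in $M$ because rank $k-1$ forces a proper member above each $(k-2)$-set, and then split into $|M|=2$ (singletons from $P$, using $k\geq 4$) versus $|M|\geq 3$ (singletons from $M$), exactly as in the paper's case split $t=2$ versus $t\geq 3$. The explicit $k=3$ counterexample is a nice addition but not needed for the statement.
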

\begin{proof}
First, assume that $\cL$ has rank $k$. By the definition of rank,
every $(k-1)$-subset of $[k]$ belongs to $\cL$. Since $\cL$ is closed under intersection,
every proper subset of $[k]$ is in $\cL$.

Next, suppose that $\cL$ has rank $k-1$ and is of type $1$. By definition, there exists $i\in [k]$
such that $[k]\setminus \{i\}\notin \cL$ but $\forall j\in [k], j\neq i,  [k]\setminus \{j\}\in \cL$. Since $\cL$ is closed under
intersection it   
   contains all the proper subsets of $[k]$ that contain $i$.

Finally, assume that $\cL$ has rank $k-1$ and is of type $2$.  By definition, there are some $t\ge 2$  different $(k-1)$-subsets
of $[k]$ that obstruct $\cL$.
Without loss of generality, we may assume that
$\forall i=1,\ldots, t, [k]\setminus \{i\} \notin \cL$ and $\forall i=t+1,\ldots, k, [k]\setminus \{i\} \in \cL$.

\medskip

{\bf Claim.}  $\forall i,j\leq t, i\neq j$, we have $[k]\setminus \{i,j\}\in \cL$.

{\it Proof of Claim.} Otherwise suppose for some $i,j\leq t, i\neq j$,  $D=[k]\setminus \{i,j\}\notin \cL$.
    Since $r(\cL)=k-1>k-2$, there must be some member of $\cL$ that contains $D$. However, the
only possible members of $\cL$ that could contain $D$ are $[k]\setminus \{i\}$ and $[k]\setminus \{j\}$,
neither of which is in $\cL$, a contradiction.

\medskip

By our discussions above, we know $\forall i=t+1,\ldots, k, [k]\setminus \{i\}\in \cL$ and
    $\forall i,j\in [t], [k]\setminus \{i,j\}\in \cL$ and  $\cL$ is closed under intersection.
If $t\geq 3$, then 
    $\{i\}\in \cL$ for each $i\in [t]$.
If $t=2$, then   
     $\{i\}\in \cL$ for each $i\in \{3,\ldots, k\}$.
So, in particular, if $k\geq 4$ then $\cL$ contains at least two singletons.
\end{proof}

Lemma~\ref{rank-facts} immediately yields

\begin{corollary}\label{singleton}
Let $k,s$ be positive integers, where $s\geq k\geq 4$.
Let $\cF^*$ be a $(k,s)$-homogeneous family with intersection
pattern $\cJ$.
Suppose $\cJ$ has rank $k$ or has rank $k-1$ and is of type $2$. Let $F\in \cF^*$.
Then there exist at least two distinct vertices $u,v\in F$ such that $\{u\}$ is the kernel of some $s$-star in $\cF^*$
and $\{v\}$ is the kernel of some $s$-star in $\cF^*$.
\end{corollary}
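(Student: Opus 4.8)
The plan is to read the conclusion directly off Lemma~\ref{rank-facts} together with property (5) of a homogeneous family. First I would treat the two cases for $\cJ$ uniformly by extracting from Lemma~\ref{rank-facts} the single fact that $\cJ$ contains at least two distinct singletons. Indeed, if $r(\cJ)=k$, then by part~1 of Lemma~\ref{rank-facts} the family $\cJ$ consists of \emph{all} proper subsets of $[k]$, so in particular it contains $\{1\},\dots,\{k\}$, and since $k\geq 4$ that is at least two of them; and if $r(\cJ)=k-1$ and $\cJ$ is of type~$2$, then part~3 of Lemma~\ref{rank-facts} --- which is exactly the place where the hypothesis $k\geq 4$ is used --- again supplies two distinct singletons $\{i\},\{j\}\in\cJ$ with $i\neq j$.

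Next I would translate these two singletons in $\cJ$ into vertices of $F$. Since $\cF^*$ is $(k,s)$-homogeneous with intersection pattern $\cJ$, we have $\Pi(\cI(F,\cF^*))=\cJ$, so there exist sets $A,A'\in\cI(F,\cF^*)$ with $\Pi(A)=\{i\}$ and $\Pi(A')=\{j\}$. Because $\cF^*$ is $k$-partite with parts $X_1,\dots,X_k$, any set whose pattern is a single index meets exactly one part, and in exactly one vertex, so $A=\{u\}$ with $u\in X_i$ and $A'=\{v\}$ with $v\in X_j$. By the definition of $\cI(F,\cF^*)$, both $A$ and $A'$ are intersections of $F$ with other members of $\cF^*$, hence $u,v\in F$; and $i\neq j$ together with $u\in X_i$, $v\in X_j$ forces $u\neq v$.

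Finally I would invoke property~(5) of Lemma~\ref{homogeneous}: for each $A\in\cI(F,\cF^*)$ there is an $s$-star in $\cF^*$ containing $F$ with kernel $A$. Applying this to $A=\{u\}$ and to $A'=\{v\}$ yields the two required $s$-stars, one with kernel $\{u\}$ and one with kernel $\{v\}$, which completes the argument. I do not expect any genuine obstacle here, since the substantive combinatorics is already contained in Lemma~\ref{rank-facts}; the only points needing a line of care are the routine observations that a singleton pattern forces a singleton set (via $k$-partiteness) and that every element of $\cI(F,\cF^*)$ is a subset of $F$.
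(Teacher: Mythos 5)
Your proof is correct and follows essentially the same route as the paper: extract two singletons in $\cJ$ from Lemma~\ref{rank-facts} (you handle the rank-$k$ case via part 1 and the type-2 case via part 3, while the paper cites part 3 directly), identify the corresponding vertices $u,v\in F$ via homogeneity and $k$-partiteness, and invoke Lemma~\ref{homogeneous}(5) to obtain the two $s$-stars. Your extra care in checking that a singleton pattern forces a singleton subset of $F$ is a fine, if routine, elaboration of the paper's one-line step $\{u\}=F[\{i\}]\in\cI(F,\cF^*)$.
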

\begin{proof}
By Lemma~\ref{rank-facts}(3), there exist $i,j\in [k]$ such that $\{i\}\in
\cJ$ and $\{j\}\in \cJ$. Let $u=F[\{i\}]$ and $v=F[\{j\}]$. Since $\cF^*$ is homogeneous, $\{u\}=F[\{i\}]\in
\cI(F,\cF^*)$ and $\{v\}=F[\{j\}]\in \cI(F,\cF^*)$. By Lemma~\ref{homogeneous}(5), each of $\{u\}$
and $\{v\}$  is the kernel of some $s$-star in $\cF^*$.
\end{proof}

A hypergraph (set system) $\cH$  is {\it linear} if every two members of $\cH$
intersect in at most one vertex.
Given a graph $H$, the {\it $k$-blowup}, denoted by $[H]^{(k)}$   (or $H^{(k)}$ for short), is the $k$-uniform hypergraph obtained
from $H$ by replacing each edge $xy$ in $H$ with a $k$-set $E_{xy}$ that consists of $x,y$ and $k-2$ new
vertices such that for distinct edges $xy, x'y'$, $(E_{xy}-\{x,y\})\cap (E_{x'y'}-\{x',y'\})=\emptyset$.
If $H$ has $p$ vertices and $q$ edges, then $H^{(k)}$ has $p+q(k-2)$ vertices and $q$ hyperedges.
The resulting $H^{(k)}$ is a $k$-uniform linear hypergraph whose vertex set contains the vertex set of $H$.
We call $H$ the {\it skeleton} of $H^{(k)}$.

   We adopt the convention that $P_\ell$ denotes a path with $\ell$ edges (and $\ell+1$ vertices).
Then $[P_\ell]^{(k)}$ is a $k$-uniform linear path of length $\ell$. Throughout the paper,  we 
    denote this hypergraph by $\mathbb{P}_\ell^{(k)}$.

\begin{theorem} \label{tree}
Let $k,s,q$ be positive integers where $k\geq 4$ and $s\geq kq$.
Let $T$ be an $q$-edge tree. Let $\cF^*$ be a $(k,s)$-homogeneous family
with intersection pattern $\cJ$. If $\cJ$ has rank $k$ or has rank $k-1$ and is of type $2$,
then $T^{(k)}\subseteq \cF^*$.
\end{theorem}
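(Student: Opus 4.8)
The plan is to embed $T^{(k)}$ into $\cF^*$ greedily, one hyperedge per edge of $T$, following a tree-search order. Fix an ordering $e_1,\dots ,e_q$ of the edges of $T$ so that for each $i$ the edges $e_1,\dots ,e_i$ span a subtree $T_i$ and, for $i\ge 1$, the edge $e_{i+1}$ has exactly one endpoint $x_{i+1}$ in $V(T_i)$, its other endpoint $y_{i+1}$ being new (such an order exists for any tree, e.g.\ via breadth-first search). I will construct $F_1,\dots ,F_q\in\cF^*$ and an injection $\phi\colon V(T)\to[n]$ maintaining, with $V_i=F_1\cup\dots\cup F_i$, the invariants: (i) for every $j\le i$, writing $e_j=ab$, we have $F_j=\{\phi(a),\phi(b)\}\cup P_j$, where the ``private sets'' $P_j$ are pairwise disjoint $(k-2)$-sets, each disjoint from $\phi(V(T))$; and (ii) for every $a\in V(T_i)$ the singleton $\{\phi(a)\}$ is the kernel of some $s$-star in $\cF^*$. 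Invariant (i) forces $F_j\cap F_{j'}=\{\phi(c)\}$ when $e_j,e_{j'}$ share the vertex $c$ and $F_j\cap F_{j'}=\emptyset$ otherwise, so $\{F_1,\dots ,F_i\}$ is a faithful copy of $T_i^{(k)}$; after $q$ steps $\{F_1,\dots ,F_q\}$ is the desired copy of $T^{(k)}$ inside $\cF^*$.

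For the base case, take any $F_1\in\cF^*$. By Corollary~\ref{singleton} (which applies since $\cJ$ has rank $k$, or rank $k-1$ and is of type $2$, and $s\ge k\ge 4$), $F_1$ contains two distinct vertices $u,v$ each of which is the kernel of an $s$-star in $\cF^*$; set $\phi(x_1)=u$, $\phi(y_1)=v$ and $P_1=F_1\setminus\{u,v\}$, which gives (i) and (ii) for $i=1$.

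For the inductive step, suppose $F_1,\dots ,F_i$ and $\phi$ on $V(T_i)$ are in place with $1\le i\le q-1$, and put $w=\phi(x_{i+1})$. By invariant (ii) there is an $s$-star $\cS\subseteq\cF^*$ with kernel $\{w\}$; every petal contains $w$ and any two petals meet only in $\{w\}$, so each vertex of $V_i\setminus\{w\}$ lies in at most one petal. Since $|V_i\setminus\{w\}|\le ik-1\le (q-1)k-1<kq\le s$, some petal $F_{i+1}$ of $\cS$ satisfies $F_{i+1}\cap V_i=\{w\}$. Then $F_{i+1}\notin\{F_1,\dots ,F_i\}$ (as each $F_j\subseteq V_i$ while $|F_{i+1}|=k\ge 2$) and $F_{i+1}\setminus\{w\}$ consists of $k-1$ vertices outside $V_i$. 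Applying Corollary~\ref{singleton} to $F_{i+1}$, one of its (at least two) singleton-kernel vertices, say $v'$, differs from $w$; set $\phi(y_{i+1})=v'$ and $P_{i+1}=F_{i+1}\setminus\{w,v'\}$. Because $F_{i+1}\setminus\{w\}$ avoids $V_i\supseteq\phi(V(T_i))\cup\bigcup_{j\le i}P_j$, the map $\phi$ stays injective and both invariants persist; invariant (ii) holds for the new vertex $y_{i+1}$ by the choice of $v'$, and for old vertices it is unchanged.

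The routine points — existence of the tree-search order and the bookkeeping that the private sets remain pairwise disjoint and disjoint from $\phi(V(T))$ — I would dispatch quickly. The crux is the inductive step: guaranteeing that the newly selected hyperedge meets the partial embedding in exactly the single attachment vertex $w$. This is precisely where Corollary~\ref{singleton} and the hypothesis $s\ge kq$ are used, since among the $s$ petals of the relevant star we must dodge all $\le (q-1)k$ previously used vertices while retaining the kernel $w$. I expect that elementary counting to be the only place the numerical hypothesis on $s$ enters, and the pattern hypothesis on $\cJ$ to be used only through Corollary~\ref{singleton}.
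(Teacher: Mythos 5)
Your proof is correct and follows essentially the same route as the paper: an edge-by-edge greedy embedding (the paper phrases it as induction on $q$ by deleting a leaf) that uses Corollary~\ref{singleton} to keep every skeleton vertex a kernel vertex and the $s$-star with $s\geq kq$ to pick a petal meeting the partial embedding only in the attachment vertex. Your bookkeeping of the invariants and the counting $|V_i\setminus\{w\}|<s$ match the paper's argument, so no changes are needed.
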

\begin{proof}
For convenience, if $\{x\}$ is the kernel of an $s$-star in $\cF^*$ we call $x$ a {\it kernel vertex} in $\cF^*$.
We use induction on $q$ to find a copy of $T^{(k)}$ in $\cF^*$ in which each vertex of $V(T)$
is mapped to a kernel vertex in $\cF^*$.
For the basis step, let $q=1$. So $T$ consists of a single edge $xy$. We take any member $F\in \cF^*$.
By Corollary~\ref{singleton}, there exist $u,v\in F$ that are kernel vertices in $\cF^*$.
Now, $F$ is a copy of $T^{(k)}$. Furthermore, by mapping $x$ to $u$ and $y$ to $v$, we fulfill the additional
requirement that each vertex in $V(T)$ is mapped to a kernel vertex in $\cF^*$.
For the induction step, let $q\geq 2$. Let $v$ be a leaf of $T$ and
$u$ its unique neighbor in $T$. Let $T_1=T-v$. By induction hypothesis, $\cF^*$ contains a copy $L$
of $[T_1]^{(k)}$ in which each vertex of $V(T_1)$ is mapped to a kernel vertex in $\cF^*$.
Suppose $u$ is mapped to $u'$. Then $\{u'\}$ is the kernel of an $s$-star $S$
in $\cF^*$. Suppose
$F_1,\ldots, F_s$ are the petals of $S$. Since $F_1\setminus \{u'\},\ldots, F_s\setminus \{u'\}$ are
pairwise disjoint and $s\geq kq>|L|$, for some $j$, $F_j\setminus \{u'\}$
is disjoint from $L\setminus \{u'\}$. Now $L\cup F_j$ forms
a copy of $T^{(k)}$. Furthermore, by Corollary~\ref{singleton}, $F_j$ contains some $v'$ other than $u'$ that is
a kernel vertex in $\cF^*$. By mapping $v$ to $v'$, we maintain the condition that each vertex of $V(T)$ is mapped to
a kernel vertex in $\cF^*$. This completes  the proof.
\end{proof}

\begin{theorem}\label{partition}
Let $k,l,s$ be positive integers with $k\geq 4$ and $s\geq kl$.
Let $\cF\subseteq {[n]\choose k}$. Suppose $\pkl\not\subseteq \cF$. Then
$\cF$ can be partitioned into
subfamilies $\cG_1,\ldots, \cG_m, \cF_0$ such that $\forall i\in [m]$, $\cG_i$ is $(k,s)$-homogeneous with
   intersection pattern $\cJ_i$ which has rank $k-1$ and
type $1$, and $|\cF_0|\leq \frac{1}{c(k,s)}{n\choose k-2}$.
\end{theorem}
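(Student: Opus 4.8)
The plan is to extract homogeneous pieces one at a time using the intersection semilattice lemma and argue that every piece we peel off must have a rank-$(k-1)$, type-$1$ intersection pattern, because any other pattern would force a copy of $\mathbb{P}^{(k)}_\ell$. Concretely, we build the partition greedily. Start with $\cF$. At a generic stage we have a remaining subfamily $\cF'\subseteq\cF$. If $|\cF'|\le \frac{1}{c(k,s)}{n\choose k-2}$ we stop and declare $\cF'$ to be (the final contribution to) $\cF_0$. Otherwise we apply Lemma~\ref{homogeneous} with parameter $s$ to $\cF'$ to obtain a $(k,s)$-homogeneous subfamily $\cG\subseteq\cF'$ with $|\cG|\ge c(k,s)|\cF'|\ge {n\choose k-2}$ and some intersection pattern $\cJ$. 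We remove $\cG$ from $\cF'$ and repeat. Since each step removes a nonempty subfamily this terminates after finitely many, say $m$, steps, yielding $\cF=\cG_1\cup\cdots\cup\cG_m\cup\cF_0$ with $|\cF_0|\le\frac{1}{c(k,s)}{n\choose k-2}$. It remains to show each $\cG_i$ has the asserted pattern.

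So fix one homogeneous piece $\cG=\cG_i$ with intersection pattern $\cJ$; we know $|\cG|\ge{n\choose k-2}$, and $\cJ$ is a family of proper subsets of $[k]$ closed under intersection. First, by Proposition~\ref{rank-bound}, $r(\cJ)\le k-2$ would give $|\cG|\le{n\choose r(\cJ)}\le{n\choose k-3}<{n\choose k-2}$ for large $n$, a contradiction; hence $r(\cJ)\in\{k-1,k\}$. (Here I use that $k\ge 4$ so $k-3\ge 1$, and that ${n\choose k-3}<{n\choose k-2}$ for $n$ large.) Next I rule out rank $k$ and rank $k-1$ of type $2$. Suppose $\cJ$ has rank $k$, or rank $k-1$ and type $2$. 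Then by Theorem~\ref{tree}, applied with the tree $T=P_\ell$ (which has $\ell$ edges, and $s\ge k\ell$ as hypothesized), we get $[P_\ell]^{(k)}=\mathbb{P}^{(k)}_\ell\subseteq\cG\subseteq\cF$, contradicting the assumption $\mathbb{P}^{(k)}_\ell\not\subseteq\cF$. Therefore $\cJ$ has rank $k-1$ and is of type $1$, which is exactly the claim; set $\cJ_i=\cJ$.

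The only genuinely delicate point is the bookkeeping that lets us stop cleanly: we must be sure that when we stop, the accumulated leftover is at most $\frac{1}{c(k,s)}{n\choose k-2}$, and that every piece we actually peeled off had size at least ${n\choose k-2}$ so that the rank argument applies to it. Both follow from the stopping rule: we only invoke Lemma~\ref{homogeneous} on a subfamily of size $>\frac{1}{c(k,s)}{n\choose k-2}$, so the resulting homogeneous piece has size $\ge c(k,s)\cdot\frac{1}{c(k,s)}{n\choose k-2}={n\choose k-2}$, and once the remainder drops to $\le\frac{1}{c(k,s)}{n\choose k-2}$ we halt and call it $\cF_0$. No real obstacle remains beyond checking that $n$ is large enough for the binomial-coefficient comparison ${n\choose k-3}<{n\choose k-2}$ and for Theorem~\ref{tree} (which is already built into ``sufficiently large $n$'' and the hypothesis $s\ge kl$). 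I would also remark that some $\cG_i$ may be empty or the list may be reordered harmlessly; none of this affects the statement.
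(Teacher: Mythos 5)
Your proof is correct and follows essentially the same route as the paper: repeatedly extract $(k,s)$-homogeneous pieces via Lemma~\ref{homogeneous}, rule out rank-$k$ and rank-$(k-1)$ type-$2$ patterns with Theorem~\ref{tree} applied to $T=P_\ell$ (using $s\geq k\ell$), and use Proposition~\ref{rank-bound} to handle low-rank patterns. The only difference is bookkeeping: the paper stops at the first extracted piece whose pattern has rank at most $k-2$ and places the entire remainder into $\cF_0$, bounding it via $|\cF_0|\leq |\cG_{m+1}|/c(k,s)\leq \frac{1}{c(k,s)}{n\choose k-2}$, whereas you stop when the remainder itself is small and then must exclude low-rank patterns among the pieces you actually extracted; both variants work.

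One small slip to repair in that last step: if $r(\cJ)\leq k-2$, Proposition~\ref{rank-bound} gives $|\cG|\leq {n\choose r(\cJ)}\leq {n\choose k-2}$ (for $n\geq 2(k-2)$), not $\leq {n\choose k-3}$, so the chain you wrote fails to produce a contradiction in the boundary case $r(\cJ)=k-2$ together with the non-strict bound $|\cG|\geq {n\choose k-2}$. The fix is immediate and already contained in your stopping rule: when you do not stop you have $|\cF'|>\frac{1}{c(k,s)}{n\choose k-2}$, hence the strict inequality $|\cG|\geq c(k,s)|\cF'|>{n\choose k-2}$, which does contradict $|\cG|\leq {n\choose k-2}$ and excludes every pattern of rank at most $k-2$.
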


\begin{proof}
First we apply Lemma~\ref{homogeneous} to $\cF$ to get a $(k,s)$-homogeneous subfamily $\cG_1$ with
intersection pattern $\cJ_1$ such that $|\cG_1|\geq c(k,s) |\cF|$. Then we apply Lemma~\ref{homogeneous}
again to $\cF-\cG_1$ to get a homogeneous subfamily $\cG_2$ with intersection pattern $\cJ_2$ such
that $|\cG_2|\geq c(k,s)(|\cF|-|\cG_1)$. We continue like this. Let $m$ be the smallest nonnegative integer
such that $\cJ_{m+1}$ has rank $k-2$ or less.  Let $\cF_0=\cF-(\bigcup_{i=1}^m \cG_i)$.
By our procedure, $|\cG_{m+1}|\geq c(k,s)|\cF_0|$.
Since $\cJ_{m+1}$ has rank at most $k-2$,
by Lemma~\ref{rank-bound}, $|\cG_{m+1}|\leq {n\choose k-2}$ and hence
 $|\cF_0|\leq \frac{1}{c(k,s)}{n\choose k-2}$.

By our assumption, $\cJ_1,\ldots, \cJ_m$ all have rank at least $k-1$.
If for some $i$,  either $\cJ$ has rank $k$ or has rank $k-1$ and is of type $2$, then by Theorem~\ref{tree},
$\pkl\subseteq \cG_i\subseteq \cF$, contradicting our assumption that $\pkl\not\subseteq \cF$. So
for each $i\in [m]$, $\cJ_i$ is of type $1$.
\end{proof}

For the remaining sections, we will refer to the partition given in Theorem~\ref{partition} as a {\it canonical
partition} of $\cF$.


\section{Kernel graphs and asymptotic bounds}
In this section, we introduce some auxiliary graphs associated with the given family $\cF\subseteq{[n]\choose k}$. Using
these we can quickly establish asymptotically tight bounds on $\ex_k(n,\podd)$ and $\ex_k(n,\peven)$. Some of the
definitions and lemmas in this section may be of independent interests.
Given a family $\cF\subseteq {[n]\choose k}$ and
a subset $W\subseteq [n]$, we define the {\it kernel degree} of $W$, denoted by $\deg^*_\cF(W)$, as
$$\deg^*_\cF(W)=\max \{s: \exists \mbox{ an $s$-star with kernel $W$  in } \cF\}.$$
Note that the {\it kernel degree} of $W$ is a much stronger notion than the  degree of $W$.

\begin{definition}\label{kernel-definition}
{\rm Given a family $\cF\subseteq {[n]\choose k}$, the {\it kernel-graph} with {\it threshold } $s$ is a graph $L$ on $[n]$
such that $\forall x,y\in [n]$,  $xy\in E(L)$ iff $\deg^*_{\cF}(\{x,y\})\geq s$.  }
\end{definition}

\begin{lemma} \label{kernel-graph}
Let $H$ be a graph with $q$ edges. Let $s=kq$.
Let $\cF\subseteq {n]\choose k}$ .  Let $L$ be the 
 kernel graph of $\cF$ with threshold $s$.
If $H\subseteq L$, then $\cF$ contains a copy of $H^{(k)}$ whose skeleton is $H$.
\end{lemma}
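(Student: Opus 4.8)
The plan is to embed a copy of $H^{(k)}$ greedily, one edge at a time, using the fact that each edge $xy\in E(H)\subseteq E(L)$ comes with a large $s$-star in $\cF$ whose kernel is $\{x,y\}$. Concretely, fix an ordering $e_1,\dots,e_q$ of the edges of $H$ and build the hyperedges $E_{e_1},\dots,E_{e_q}$ of the blowup in that order. When we come to process $e_i=xy$, we have already chosen hyperedges $E_{e_1},\dots,E_{e_{i-1}}$; together they use at most $(i-1)(k-2)$ ``new'' vertices (beyond the vertices of $H$ itself, which are fixed once and for all as the vertices $V(H)\subseteq[n]$ that make $H$ a subgraph of $L$). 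Since $xy\in E(L)$, by Definition~\ref{kernel-definition} we have $\deg^*_\cF(\{x,y\})\ge s=kq$, i.e.\ there is an $s$-star in $\cF$ with kernel $\{x,y\}$; its petals $F_1,\dots,F_s$ satisfy $F_a\cap F_b=\{x,y\}$ for $a\ne b$, so the sets $F_a\setminus\{x,y\}$ are pairwise disjoint $(k-2)$-sets.

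The key counting step is then routine: the partial blowup constructed so far, together with $V(H)$, occupies at most $|V(H)|+q(k-2)<s$ vertices (we may safely assume $n$, hence $[n]$, is large enough that $H^{(k)}$ fits, and in any case $s=kq\ge |V(H)|-2+q(k-2)$ whenever $H$ is a forest, and more generally one only needs $s$ larger than the number of vertices of $H^{(k)}$ — here $s=kq$ suffices because $H$ has $q$ edges so $|V(H)|\le q+1\le 2q$ and $|V(H^{(k)})|\le q+1+q(k-2)\le kq$). Among the $s$ pairwise-disjoint petal-cores $F_1\setminus\{x,y\},\dots,F_s\setminus\{x,y\}$, at most $s-2$ of them can meet the already-used vertex set, so at least one of them, say $F_j\setminus\{x,y\}$, is disjoint from everything used so far. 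We set $E_{e_i}:=F_j$, which is a hyperedge of $\cF$ containing $x$ and $y$ and $k-2$ genuinely new vertices, exactly as required by the definition of the $k$-blowup.

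After all $q$ steps, the hyperedges $E_{e_1},\dots,E_{e_q}$ form a copy of $H^{(k)}$ in $\cF$: each $E_{e_i}$ contains the two endpoints of $e_i$ plus $k-2$ vertices that are new relative to $V(H)$ and to all other $E_{e_j}$, which is precisely the defining property of $[H]^{(k)}$, and its skeleton is $H$ by construction. The only thing to double-check is the arithmetic ensuring a free petal always exists, i.e.\ that $s$ exceeds the total number of vertices ever used; this is where the choice $s=kq$ is spent, and it is the one place the bound on $q$ versus $s$ matters. I do not expect any real obstacle here — this lemma is essentially the ``kernel-graph'' analogue of Theorem~\ref{tree}, and the same greedy extension argument works verbatim, the only difference being that here the skeleton $H$ is an arbitrary graph rather than a tree, so we insert hyperedges in an arbitrary edge order instead of a leaf order, and we no longer need to track which vertices are kernel vertices.
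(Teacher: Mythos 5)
Your proposal is correct and is essentially the paper's own proof: the same greedy edge-by-edge embedding, where for each edge $xy\in E(H)\subseteq E(L)$ one takes an $s$-star with kernel $\{x,y\}$ and, since its petal-cores $F_a\setminus\{x,y\}$ are pairwise disjoint and the vertices used so far number fewer than $s=kq$, picks a petal avoiding everything already used (you even avoid all of $V(H)$ throughout, which the paper only makes explicit at the first step). The only nit is the claim $|V(H)|\le q+1$, which fails for disconnected $H$, but the bound $|V(H)|\le 2q$ you also state gives $|V(H^{(k)})|\le 2q+q(k-2)=kq=s$ and the pigeonhole count (at most $s-2$ blocked cores after excluding $x,y$) still goes through.
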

\begin{proof}
Let $e_1,\ldots e_q$ be the edges of $H$.  For each $i$, suppose the two endpoints of $e_i$ are
$x_i$ and $y_i$.
We will replace each $e_i$ with a member $E_i$ of $\cF$ that contain $x_i,y_i$
such that $E_1\setminus\{x_1,y_1\}, \ldots, E_q\setminus \{x_q,y_q\}$ are pairwise disjoint.
Since $x_1y_1\in E(L)$, $\deg^*_\cF(\{x_1,y_1\})\geq s$.
Let $E_1$ be any member of $\cF$ that contains $x_1$ and $y_1$ and avoids all $x_2, y_2, \dots, x_q, y_q$.
In general, suppose we have found $E_1,E_2,\ldots, E_{i-1}$.
Since $x_iy_i\in E(L)$, $\deg^*_{\cF}(\{x_i, y_i\})\geq s$  
 there exists an $s$-star $S$ in $\cF$ with kernel $\{x_i,y_i\}$. Let $F_1,\ldots,
F_s$ denote  the petals of $S$. Since $F_1\setminus \{x_i,y_i\}, \ldots, F_s\setminus \{x_i,y_i\}$
are pairwise disjoint and $|(\bigcup_{j=1}^{i-1} E_j)\setminus \{x_i,y_i\}|<kq=s$,  
   there exists an $h\in[s]$ such that $F_h\setminus \{x_i,y_i\}$ is disjoint from $(\bigcup_{j=1}^{i-1} E_j)\setminus \{x_i,y_i\}$.
We can let $E_i=F_h$. We can continue till we find $E_1,\ldots, E_q$ in $\cF$ that meet the requirements. The system
$\{E_1,\ldots, E_q\}$ forms a copy of $H^{(k)}$ whose skeleton is $H$.
\end{proof}

Suppose $\cF$ can be decomposed into $\cF_1,\ldots, \cF_m$, where for each $i\in [m]$,
$\cF_i$ is homogeneous with intersection pattern $\cJ_i$, where $\cJ_i$ has rank $k-1$ and is of type $1$.
We define the {\it $(k,s)$-homogeneous kernel graph} of $\cF$ as follows. Fix any $F\in \cF$. Suppose
$F\in \cF_p$. By Lemma~\ref{rank-facts}, $\cJ_p$ has a central element $i$ such that all proper subsets of $[k]$
containing $i$ are members of $\cJ_p$. In particular, $\{i\}\in \cJ_p$ and $\{i,i'\}\in \cJ_p$ for each $i'\in [k]\setminus \{i\}$.
So $F[\{i\}]\in \cI(F,\cF_p)$ and $F[\{i,i'\}]\in \cI(F,\cF_p)$ for each $i'\in [k] \setminus \{i\}$. We denote $F[\{i\}]$ by
$c(F)$ and call it the {\it central element} of $F$. Thus, we have  $c(F)\in \cI(F,\cF_p)$
and $\{c(F),y\}\in \cI(F,\cF_p)$ for each $y\in F\setminus \{c(F)\}$.
 Note that although $c(F)$ is uniquely determined, it is possible that
 $c(F')\in F$ for some $F\neq F'\in \cF_j$.
Since $\cF_p$ is $(k,s)$-homogeneous, we have $\deg^*_{\cF_p}(\{c(F)\})\geq s$ and
$\deg^*_{\cF_p}(\{c(F),y\})\geq s$ for each $y\in F\setminus \{c(F)\}$. In particular,
this implies that
$$\deg^*_{\cF}(\{c(F)\})\geq s \,\mbox{ and }\, \deg^*_{\cF}(\{c(F),y\})\geq s  \,\,\mbox{ for }\,    \forall y\in F\setminus \{c(F)\} .$$
We define the {\it $(k,s)$-homogeneous kernel} graph $H$ of $\cF$ to be a directed multi-graph on $[n]$ whose edges consist of all the ordered pairs $(c(F),y)$ over all $F\in \cF$ and $y\in F\setminus c(F)$.
    Note that $H$ has edge-multiplicity at most two.
Furthermore, we mark $c(F)$ for each $F\in \cF$.  Let $H'$ denote the underlying simple undirected graph of $H$.
Note that $H'$ is a subgraph of the kernel graph of $\cF$ with threshold $s$. Also, note that at least one
of the two endpoints of each edge of $H'$ is marked.

\begin{lemma} \label{kernel-bound}
Let $\cF\subseteq {[n]\choose k}$. Suppose that $\cF$ can be partitioned into subfamilies
$\cF_1,\ldots, \cF_m$ such that
for each $i=1,\ldots, m$,  $\cF_i$ is $(k,s)$-homogeneous with intersection
pattern $\cJ_i$ that has rank $k-1$ and is of type $1$.
Let $H$ be the $(k,s)$-homogeneous kernel graph of $\cF$.
Let $H'$ be the underlying undirected simple graph of $H$.
We have $$|\cF|\leq \frac{e(H')}{k-1}{n-2\choose k-2}.$$
\end{lemma}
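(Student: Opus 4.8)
The plan is to bound $|\cF|$ by charging each member $F \in \cF$ to a pair $\{c(F), y\}$ with $y \in F \setminus \{c(F)\}$ that generates an edge of $H'$, and then showing that each such pair can be ``overcharged'' by at most $\binom{n-2}{k-2}$ members of $\cF$ while simultaneously each edge of $H'$ absorbs at least $k-1$ of the pairs coming from a single $F$. First I would fix the decomposition $\cF = \cF_1 \cup \dots \cup \cF_m$ and recall that for each $F$, with $F \in \cF_p$ say, the central vertex $c(F)$ is well-defined and the pairs $\{c(F), y\}$ over $y \in F \setminus \{c(F)\}$ are all in $\cI(F, \cF_p)$, hence each such pair $\{c(F), y\}$ is the kernel of an $s$-star in $\cF_p \subseteq \cF$ and so $c(F)y$ (directed as $(c(F), y)$) is an edge of $H$, and $c(F)y \in E(H')$. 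Thus a single $F$ contributes $k-1$ ordered pairs $(c(F), y)$, all with the same first coordinate $c(F)$ which is marked.

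Next I would set up the counting. Consider the set $\mathcal{P} = \{(F, y) : F \in \cF,\ y \in F \setminus \{c(F)\}\}$, which has exactly $(k-1)|\cF|$ elements. Map $(F,y) \mapsto \{c(F), y\} \in E(H')$; this is well-defined by the previous paragraph. The key claim is that for each fixed unordered edge $e = \{a,b\} \in E(H')$, the number of pairs $(F,y)$ mapping to $e$ is at most $\binom{n-2}{k-2}$. Indeed, if $(F, y) \mapsto e$ then $\{a, b\} = \{c(F), y\} \subseteq F$, so $F$ is a $k$-set containing the fixed $2$-set $\{a,b\}$; there are at most $\binom{n-2}{k-2}$ such $k$-sets, and for each such $F$ the vertex $y$ is determined as the unique element of $\{a,b\}\setminus\{c(F)\}$ (or, if it turns out both $a$ and $b$ could serve, one still gets at most a constant blow-up — but in fact $c(F)$ is uniquely determined by $F$, so $y$ is forced, giving the clean bound $\binom{n-2}{k-2}$). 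Summing over all edges of $H'$ gives $(k-1)|\cF| = |\mathcal{P}| \le e(H')\binom{n-2}{k-2}$, which rearranges to the desired inequality.

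The step I expect to require the most care is verifying that the map $(F,y)\mapsto\{c(F),y\}$ really does land in $E(H')$ for every admissible $(F,y)$ — this is where I must invoke Lemma~\ref{rank-facts}(2) to locate the central element of the pattern $\cJ_p$, then use homogeneity (item (5) of Lemma~\ref{homogeneous}) to upgrade ``$\{c(F),y\}\in\cI(F,\cF_p)$'' to ``$\{c(F),y\}$ is the kernel of an $s$-star'', and finally unwind the definition of $H$ (and that edge-multiplicity in $H$ does not matter once we pass to the simple graph $H'$). A small subtlety worth a sentence: since the orientation in $H$ points away from $c(F)$, even if a pair $\{a,b\}$ is hit ``from both sides'' by different members, in $H'$ it is still a single edge, so no double counting of edges occurs. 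Once these checks are in place the arithmetic is immediate.
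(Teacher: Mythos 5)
Your proposal is correct and follows essentially the same double-counting argument as the paper: count incidences between members $F\in\cF$ and pairs contained in $F$ that form edges of $H'$, noting each $F$ supplies the $k-1$ pairs $\{c(F),y\}$ while each edge lies in at most $\binom{n-2}{k-2}$ members. Your restriction to pairs through $c(F)$ is in fact a slightly cleaner bookkeeping than the paper's (which says each $F$ contributes ``exactly'' $k-1$ when ``at least'' is what is used), but the argument is the same.
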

\begin{proof}
Consider the number $q$ of pairs $(\{x, y\}, F)$ where  $F\in \cF, x,y\in F$ and   
    $\{x,y\}\in E(H')$.
Each $F\in \cF$ contributes exactly $k-1$ to $q$. On the other hand, for each unordered pair $x,y$
trivially there are at most ${n-2\choose k-2}$ members of $\cF$ that contain $x,y$.  So, each $xy\in E(H')$
contributes at most ${n-2\choose k-2}$ to $q$. So, we have $(k-1)|\cF|=q\leq e(H'){n-2\choose k-2}$.
\end{proof}

Now we are ready to establish asymptotic tight bounds on $\ex_k(n,\podd)$ and $\ex_k(n,\peven)$.
We need the following classical result 
 concerning the circumference of a graph.   

\begin{lemma}\label{circumference} {\bf (Erd\H{o}s and Gallai~\cite{erdos-gallai})}
If $G$ is an $n$-vertex graph that contains no cycle of length at least $c$, where $c\geq 3$, then
$e(G)\leq \frac{1}{2}(c-1)(n-1)$.
\end{lemma}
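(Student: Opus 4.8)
The plan is to prove the bound by induction on the number of vertices $n$, first peeling off the non-$2$-connected case via the block decomposition, and then treating $2$-connected graphs through a minimum-degree dichotomy combined with the classical theorem of Dirac that every $2$-connected graph has a cycle of length at least $\min(n,2\delta)$.

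First I would dispose of small graphs: if $n\le c-1$ then no cycle can have length $c$ at all, and $e(G)\le\binom{n}{2}=\tfrac12 n(n-1)\le\tfrac12(c-1)(n-1)$; so from now on assume $n\ge c$ and argue by induction. If $G$ is disconnected with components of orders $n_1,\dots,n_r$ ($r\ge 2$), the inductive hypothesis applied to each component gives $e(G)=\sum_i e(G_i)\le\tfrac12(c-1)\sum_i(n_i-1)=\tfrac12(c-1)(n-r)\le\tfrac12(c-1)(n-1)$. If $G$ is connected but has a cut vertex $v$, write $G=G_1\cup G_2$ with $V(G_1)\cap V(G_2)=\{v\}$ and $n_1+n_2=n+1$, where $n_i=|V(G_i)|<n$; each $G_i$ has circumference less than $c$, so $e(G)=e(G_1)+e(G_2)\le\tfrac12(c-1)(n_1-1)+\tfrac12(c-1)(n_2-1)=\tfrac12(c-1)(n-1)$. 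Hence we may assume $G$ is $2$-connected (and $n\ge c\ge 3$).

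Now suppose $G$ is $2$-connected on $n\ge c$ vertices with no cycle of length at least $c$. If $G$ has a vertex $v$ with $\deg(v)\le\lfloor(c-1)/2\rfloor$, I delete it: $G-v$ has $n-1$ vertices and circumference less than $c$, so by induction $e(G-v)\le\tfrac12(c-1)(n-2)$, whence $e(G)=e(G-v)+\deg(v)\le\tfrac12(c-1)(n-2)+\tfrac12(c-1)=\tfrac12(c-1)(n-1)$, as desired (note that $G-v$ need not be $2$-connected, but the induction hypothesis covers all graphs on fewer than $n$ vertices). Otherwise $\delta(G)\ge d:=\lfloor(c-1)/2\rfloor+1$, and a one-line check shows $2d\ge c$ (for $c$ odd one gets $2d=c+1$, for $c$ even $2d=c$). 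By Dirac's theorem, $G$ then contains a cycle of length at least $\min(n,2d)\ge c$ (using $n\ge c$), contradicting the hypothesis. Thus the last subcase cannot occur and the induction is complete.

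The substantive step is the $2$-connected case, and the only non-elementary ingredient is Dirac's bound: this is exactly where $2$-connectivity is essential, since a minimum-degree hypothesis $\delta\ge d$ alone forces only a cycle of length about $d$ (take a longest path), roughly half of what is needed. For a fully self-contained treatment I would prove Dirac's theorem by the usual longest-cycle argument: take a longest cycle $C$; if $V(G)=V(C)$ use $\delta$ directly on $C$, and otherwise pick a vertex $x$ off $C$, invoke $2$-connectivity (Menger) to get two internally disjoint paths from $x$ to $C$ meeting it at distinct vertices $u,w$, and splice the resulting detour from $u$ to $w$ through $x$ into the longer arc of $C$ to build a strictly longer cycle, unless the attachment points together with the $\ge\delta$ neighbours of $x$ already force $|C|\ge 2\delta$ through a count of the arcs between consecutive attachments. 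As a consistency check, every inequality above is tight for the graphs obtained by gluing copies of $K_{c-1}$ along cut vertices, which are precisely the extremal configurations behind the bound.
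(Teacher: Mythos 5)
The paper does not prove this lemma at all: it is quoted verbatim as the classical Erd\H{o}s--Gallai theorem with a citation to their 1959 paper, so the only comparison available is with the standard literature. Your argument is correct and is essentially the textbook proof of the circumference bound: induction on $n$, with the trivial range $n\le c-1$, the reduction over components and cut vertices (where the count $n_1+n_2=n+1$ and the additivity of edges are used correctly), and then, in the $2$-connected case, the dichotomy between a vertex of degree at most $\lfloor (c-1)/2\rfloor$ (delete it and apply induction, which is legitimate since the induction hypothesis is over all graphs on fewer vertices) and minimum degree $d=\lfloor (c-1)/2\rfloor+1$ with $2d\ge c$, which contradicts Dirac's theorem that a $2$-connected graph has a cycle of length at least $\min(n,2\delta)$ once $n\ge c$. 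All the inequalities check out, and your tightness remark (disjoint or cut-vertex-glued copies of $K_{c-1}$) matches the known extremal configurations. The one point worth flagging is that your proof is not self-contained: it leans on Dirac's $\min(n,2\delta)$ bound, which you state correctly but only sketch; the ``count of the arcs between consecutive attachment points'' step is exactly the nontrivial part of that proof and would need the usual careful rotation/crossing argument if you wanted a from-scratch treatment. By contrast, Erd\H{o}s and Gallai's original route runs a longest-path/longest-cycle induction directly and avoids quoting Dirac as a black box; your modular version is shorter and cleaner given Dirac, while theirs buys full self-containment. Either way, the statement as used in the paper is safely established.
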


\begin{theorem} \label{peven-general}
Let $k,t$ be positive integers, where $k\geq 4$.
We have $$\ex_k(n,\podd)\leq \ex_k(n,\peven)\leq t{n-1\choose k-1}+O(n^{k-2}).$$
\end{theorem}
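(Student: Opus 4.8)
The plan is to combine the decomposition machinery of Section~3 with the kernel-graph bounds of Section~4 and the Erd\H{o}s--Gallai circumference lemma. First I would apply Theorem~\ref{partition} (with $\ell=2t+2$ and $s=k\ell$) to the $\peven$-free family $\cF$: since $\peven\not\subseteq\cF$ implies $\podd\not\subseteq\cF$, it suffices to prove the bound assuming only $\peven\not\subseteq\cF$. The canonical partition writes $\cF=\cG_1\cup\dots\cup\cG_m\cup\cF_0$ with each $\cG_i$ $(k,s)$-homogeneous of rank $k-1$ and type~$1$, and $|\cF_0|\le\frac{1}{c(k,s)}\binom{n}{k-2}=O(n^{k-2})$. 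So the whole problem reduces to bounding $|\cG_1\cup\dots\cup\cG_m|$, which is exactly the setting of Lemma~\ref{kernel-bound}: this union is covered by homogeneous families all of rank $k-1$ and type~$1$, so if $H'$ denotes the underlying simple graph of its $(k,s)$-homogeneous kernel graph $H$, then $|\cF|\le \frac{e(H')}{k-1}\binom{n-2}{k-2}+O(n^{k-2})$.

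The heart of the argument is therefore to show $e(H')\le t(k-1)+O(1)$, equivalently that $H'$ cannot be too dense. Here is where I would use the fact that $\peven$-freeness forbids long structures in the kernel graph. By Lemma~\ref{kernel-graph}, if the kernel graph of $\cF$ with threshold $s=kq$ contains a copy of a $q$-edge graph $H_0$, then $\cF$ contains $H_0^{(k)}$. Taking $H_0$ to be the path $P_{\ell}=P_{2t+2}$ (which has $2t+2$ edges, so we need $s\ge k(2t+2)$, matching our choice), a copy of $P_{2t+2}$ inside $H'$ would yield $\mathbb{P}^{(k)}_{2t+2}=\peven\subseteq\cF$, a contradiction. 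Hence $H'$ contains no path of length $2t+2$, so by the Erd\H{o}s--Gallai path theorem $e(H')\le \frac{1}{2}(2t+1)n=(t+\frac12)n$. Plugging this into Lemma~\ref{kernel-bound} gives $|\cF|\le \frac{(t+1/2)n}{k-1}\binom{n-2}{k-2}+O(n^{k-2})$, which is roughly $(t+\tfrac12)\binom{n-1}{k-1}$ — too weak by a factor, so a cruder path bound does not suffice.

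To get the correct leading constant $t$ I would instead use the circumference version, Lemma~\ref{circumference}, together with the ``marking'' structure of $H'$: recall that at least one endpoint of every edge of $H'$ is a central vertex $c(F)$. If $H'$ contained a cycle $C$ of length at least $2t+3$, then it contains a path of length $2t+2$, already excluded; so $H'$ has circumference at most $2t+2$, and Lemma~\ref{circumference} gives $e(H')\le\frac12(2t+1)(n-1)$ — still the same constant. The genuine gain comes from exploiting that, after deleting from $H'$ the $O(1)$-sized set of vertices of ``high'' marked-degree (those $c(F)$ of degree $\ge k$, of which there are only boundedly many by a counting argument on $\binom{n-2}{k-2}$), what remains must have \emph{small} circumference: a long path in the kernel graph through low-degree central vertices can still be expanded to a linear path $\peven$ by Lemma~\ref{kernel-graph}, so the circumference of the residual graph is at most a constant $c_0=c_0(t)$, whence its edge count is $O(n)$ with a \emph{negligible} main term after we observe each such residual vertex has bounded degree. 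The $O(1)$ high-degree central vertices each contribute at most $\binom{n-1}{k-1}$ members of $\cF$ (all $k$-sets through that vertex), and there are at most $t$ of them that can simultaneously avoid creating a $\peven$ — this last point, that at most $t$ vertices can be ``heavy'', is the crux, proved by noting that $t+1$ heavy central vertices would let one build $\peven$ by routing a linear path through them using the $s$-star structure and Lemma~\ref{kernel-graph}. Summing, $|\cF|\le t\binom{n-1}{k-1}+O(n^{k-2})$.

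The main obstacle is the last step: showing that the number of ``heavy'' central vertices is at most $t$, and that the low-degree part of the kernel graph has only $O(n)$ edges with no contribution to the leading term. This requires a careful argument that any sufficiently long alternating path/structure in the kernel graph — even one that mixes heavy and light vertices — can be lifted via the $s$-stars (with $s$ chosen large enough, $s\ge k(2t+2)$, so that the disjointness greedy selection in Lemma~\ref{kernel-graph} never gets stuck) to an honest copy of $\peven$ in $\cF$; the bookkeeping of which vertices are already used, and ensuring the kernel vertices line up so that consecutive petals meet in exactly one point, is the delicate part. I expect the write-up to isolate a lemma of the form ``if the $(k,s)$-homogeneous kernel graph has more than $t(k-1)\binom{n-2}{k-2}+O(n^{k-2})$ worth of structure then $\peven\subseteq\cF$,'' proved by this lifting argument, after which the stated bound follows immediately.
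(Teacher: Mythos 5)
Your setup (canonical partition via Theorem~\ref{partition} with $s=k(2t+2)$, passing to the $(k,s)$-homogeneous kernel graph $H$ and its underlying simple graph $H'$, and bounding $|\cF'|$ by $\frac{e(H')}{k-1}\binom{n-2}{k-2}$ via Lemma~\ref{kernel-bound}) matches the paper, and you correctly diagnose that forbidding only a path $P_{2t+2}$ in $H'$ yields $e(H')\le(t+\frac12)n$, which loses a factor. But at the crux you miss the paper's key observation and replace it with an argument that does not work. The paper gets the constant $t$ by proving that $H'$ has circumference at most $2t$ (not $2t+2$), using precisely the marking structure you mention but do not exploit: every edge of $H'$ has at least one endpoint that is a central element $c(F)$, hence a vertex of singleton kernel degree at least $s$. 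If $H'$ had a cycle of length at least $2t+2$, it would contain a path of length $2t+1$ with a marked endpoint; lifting it to $\podd$ by Lemma~\ref{kernel-graph} and appending one petal of the $s$-star at the marked end gives $\peven$. If the cycle has length exactly $2t+1$, then (since $2t+1$ is odd and every edge has a marked endpoint) two consecutive vertices on it are marked, so one finds a path of length $2t$ with both ends marked, lifts it to $\mathbb{P}^{(k)}_{2t}$, and extends at both ends. Then Lemma~\ref{circumference} with $c=2t+1$ gives $e(H')\le t(n-1)$ and Lemma~\ref{kernel-bound} finishes.

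Your substitute route has concrete gaps. First, the claim that only $O(1)$ central vertices have degree at least $k$ is unjustified and false in general: nothing at this stage prevents many vertices of intermediate kernel degree (the paper's Section~5, where a degree dichotomy is actually used, takes a threshold $D=n^{1-\frac{3/2}{k-1}}$ and only bounds the number of high-degree vertices by $O(n/D)$, not $O(1)$). Second, your ``crux'' claim that $t+1$ heavy central vertices would force $\peven$ is not substantiated: $t+1$ vertices that are kernels of $s$-stars give $t+1$ pairwise disjoint linear paths of length $2$, but there is no way to concatenate these into a single linear path of length $2t+2$ without kernel-graph edges linking them, which is exactly the information you have discarded. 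Third, the assertion that the residual (light) part contributes negligibly is not right as stated: a residual graph with $\Theta(n)$ edges still contributes $\Theta(n^{k-1})$ through Lemma~\ref{kernel-bound}; to make it negligible you would need the finer accounting by central-element out-degree (each central element of out-degree $d$ supports at most $\binom{d}{k-1}$ members), which you gesture at but do not carry through. So the proposal correctly reduces the theorem to bounding $e(H')$ but fails exactly where the paper's marked-cycle argument is needed.
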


\begin{proof}
Let $\cF\subseteq{[n]\choose k}$ be family that contains no copy of $\peven$. Let $s=k(2t+2)$.
By Theorem~\ref{partition}, there exists a partition of $\cF$ into
$\cG_1,\ldots, \cG_m,\cF_0$, where $|\cF_0|\leq \frac{1}{c(k,s)}{n\choose k-2}$ and
for each $i\in [m]$ $\cG_i$ is $(k,s)$-homogeneous with intersection
pattern $\cJ_i$ that has rank $k-1$ and is of type $1$.
Let $\cF'=\cG_1\cup\ldots \cG_m$. Let $H$ be the $(k,s)$-kernel graph of $\cF'$ and
$H'$ the underlying undirected simple graph of $H$.

\medskip

{\bf Claim 1.}  $H'$ has circumference at most $2t$.

\medskip

{\it Proof of Claim 1.} Otherwise suppose $H'$ contains a cycle $C$ of length at least $2t+1$.
Recall that in each edge of $H'$, at leat one endpoint is marked.  If $C$ has length at least
$2t+2$, then we can find a path of length $2t+1$ on $C$ with one of the endpoints being marked.
If $C$ has length $2t+1$ (which is odd) then we can find two consecutive
vertices on $C$ that are marked, in which case we can find a path of length $2t$ both of whose
endpoints are marked.

In the former case, suppose $x_1x_2\ldots x_{2t+2}$ is a path of length $2t+1$ on $C$ where $x_1$ is marked.
By Lemma~\ref{kernel-graph}, $\cF$ contains a copy $\cP$ of $\podd$ whose skeleton is $x_1x_2\ldots x_{2t+2}$.
Since $x_1$ is marked, $\deg^*_{\cF'}(\{x_1\})\geq s=k(2t+2)$. Let $F_1,\ldots, F_s$ be the petals of
an $s$-star in $\cF$ with kernel $\{x_1\}$. Since $\cP$ has fewer than $k(2t+1)$ vertices and $F_1\setminus\{x_1\},
\ldots, F_s\setminus \{x_1\}$ are pairwise disjoint, for some $h\in [s]$, $F_h\setminus \{x_1\}$ is disjoint
from $\cP$. We can add $F_h$ to $\cP$ to form a copy of $\peven$, contradicting the assumption that $\cF$
contains no $\peven$.

In the latter case, suppose $x_1x_2\ldots x_{2t+1}$ is a path of length $2t$ on $C$ where both
$x_1$ and $x_{2t+1}$ are marked. By Lemma~\ref{kernel-graph}, $\cF$ contains a copy $\cP$
of $\mathbb{P}^{(k)}_{2t}$ whose skeleton is $x_1x_2\ldots x_{2t+1}$. Using that
$\deg^*_{\cF'}(x_1)\geq s$ and $\deg^*_{\cF'}(x_{2t+1})\geq s$, we can extend $\cP$ into a copy
of $\peven$, a contradiction. \qed

\medskip

Since $H'$ has circumference at most $2t$, by Lemma~\ref{circumference} (with $c=2t+1$), we have
$e(H')\leq t(n-1)$. By Lemma~\ref{kernel-graph}, $|\cF'|\leq \frac{t(n-1)}{k-1}{n-2\choose k-2}=t{n-1\choose k-1}$.
Therefore, $|\cF|\leq  t{n-1\choose k-1}+\frac{1}{c(k,s)}{n\choose k-2}   $. \end{proof}
Note that if we were to just prove  $\ex_k(n,\podd)\leq t{n-1\choose k-1}+O(n^{k-2})$,
it would have sufficed to just use the Erd\H{o}s-Gallai  theorem on
$\ex(n, P_{2t+1})$ to get $e(H')\leq tn$, from which the bound follows.

To close this section, we observe that following the arguments in \cite{gyori},  by iteratively
removing vertices of degree at most $(k-1)(\ell-1){n-2 \choose k-2}$ one can prove by induction
that the following bound holds for every $n$:
\begin{equation}\label{eq:all_n}
   \ex(n, \pkl)\leq (k-1)(\ell-1) {n-1 \choose k-1}.
    \end{equation}

Even though this is a weaker bound than Theorem \ref{peven-general} for large $n$, it holds for every $n$.
We will use this  bound in certain estimates in the next section.


\section{Proof of Theorem~\ref{main} and the stability of the bounds}

In this section, we determine the exact value of $\ex_k(n,\pkl)$ for large $n$.
For convenience, we let $f(n,k,t)={n-1\choose k-1}+{n-2\choose k-1}+\ldots + {n-t\choose k-1}$
and $g(n,k,t)={n-1\choose k-1}+{n-2\choose k-1}+\ldots+ {n-t\choose k-1}+{n-t-2\choose k-2}$.
Here, $f(n,k,t)$ is the number of $k$-sets in $[n]$ that meet a fixed set $S$ of $t$ elements of $[n]$
and $g(n,k,t)$ is $f(n,k,t)$ plus the number of $k$-sets in $[n]\setminus S$ that contain some fixed set of
two elements.
We wish to show that for fixed $k,t$, where $k\geq 4$, $\ex_k(n,\podd)=f(n,k,t)$
and $\ex_k(n,\peven)=g(n,k,t)$.  We already established the lower bounds
in the introduction.  Note that $f(n,k,t)\geq t{n-1\choose k-1}
-c_1n^{k-2}$ and $g(n,k,t)\geq t{n-1\choose k-1}-c_2n^{k-2}$ for some constants
$c_1, c_2$ depending on $k,t$. Let $\cF\subseteq{[n]\choose k}$ be a family that contains
no copy of $\peven$. We may assume that there exists  a constant $c_3$, depending on $k$ and $t$, such
that $|\cF|\geq t{n-1\choose k-1}-c_3n^{k-2}$, since otherwise $|\cF|\leq f(n,k,t)$ and
$|\cF|\leq g(n,k,t)$ already hold. As a key step, we first show that $\cF$ must already have a structure very
similar to the extremal construction.

Let $s=k(2t+2)$. Let $\cG_1,\ldots, \cG_m,\cF_0$ be a canonical partition of $\cF$, where
 $|\cF_0|\leq \frac{1}{c(k,s)}{n\choose k-2}$ and  for each $i\in [m]$, $\cG_i$ is $(k,s)$-homogeneous
with intersection pattern $\cJ_i$ that has rank $k-1$ and is of type $1$. Let $\cF'=\bigcup_{i=1}^m \cG_i$.
Let $H$ be the $(k,s)$-kernel graph of $\cF'$.  Recall that $H$ is a directed multigraph with edge-multiplicity at most $2$.
Let $H'$ denote the underlying undirected simple graph of $H$.
For each $x\in V(H)$, let $d^+(x)$ denote the out-degree of $x$ in $H$.
By Claim 1 of Theorem~\ref{peven-general}, $H'$ has circumference at most $2t$ and so
$e(H')\leq t(n-1)<tn$ and  $e(H)=\sum_{x\in V(H)} d^+(x)\leq 2e(H')< 2tn$.
  Let $D=n^{1-\frac{3/2}{k-1}}$.   
Define $$A=\{x\in V(H): d^+(x)\leq D\} , \quad
B= \{x\in V(H): d^+(x) > D\}.$$

Let $\cF_A$ denote the set of members $F$ of $\cF'$ whose central element $c(F)$ lies in $A$.
By our definition of $H$,  we have  
$$|\cF_A|\leq |A|\cdot {D\choose k-1}<n\cdot D^{k-1}=n^{k-\frac{3}{2}}.$$
Since $\sum_{x\in V(H)} d^+(x)<2tn$, we have $|B|<2tn/D=2tn^{\frac{3/2}{k-1}}$.
The subgraph of $H'$ induced by $B$, denoted by $H'[B]$, also has circumference at most
$2t$ and thus $e(H'[B])< t|B|<2t^2n^{\frac{3/2}{k-1}}$.
Let $\cF_B$ denote the set of members of $\cF'$ that contain edges of $H'[B]$. We have
$$|\cF_B|\leq e(H'[B]){n-2\choose k-2}<2t^2n^{k-2+\frac{3/2}{k-1}}\leq 2t^2n^{k-\frac{3}{2}}.$$

Let $\wt{\cF}=\cF'\setminus (\cF_A\cup \cF_B)=\cF\setminus(\cF_0\cup\cF_A\cup \cF_B)$.
By our discussions above, $$|\wt{\cF}|\geq t{n-1\choose k-1}-O(n^{k-\frac{3}{2}}).$$
By our definition of $\cF_A$ and $\cF_B$, we have
    $$\wt{\cF}=\{F\in \cF': c(F)\in B, |F\cap B|=1\}.$$
Let $\wt{H}$ denote the subgraph of $H$ consisting of all edges going from $B$ to $A$.
Suppose $B=\{x_1,\ldots, x_p\}$.
For each $i\in [p]$, let $d_i= d^+_{\wt{H}}(x_i)$.
Based on 
   the definition of $\wt{\cF}$ 
  we have
$$|\wt{\cF}|\leq \sum_{i=1}^p {d_i\choose k-1}.$$
Thus,
\begin{equation} \label{degree-lower}
 \sum_{i=1}^p {d_i\choose k-1}\geq t{n\choose k-1}-O(n^{k-\frac{3}{2}}).
\end{equation}
Since $d_i<n$ we get $p\geq t$.
On the other hand, we have
\begin{equation}\label{degree-upper}
\sum_{i=1}^p d_i=e(\wt{H})\leq e(H')< tn.
\end{equation}
Without loss of generality, we may assume that $d_1\geq d_2\geq \ldots \geq d_p$.

\medskip

{\bf Claim 2.} We have $d_1,\ldots, d_t\geq n-O(n^{\frac{1}{2}})$.

\medskip

{\it Proof of Claim 2.} Let $b=\sum_{i=1}^p d_i$ and $y=d_t$.
Using~(\ref{degree-lower})  and convexity we get
   \begin{eqnarray*}
   t{n\choose k-1}-O(n^{k-\frac{3}{2}})&\leq& \sum_{i=1}^p {d_i\choose k-1}\leq \sum_{i=1}^{t-1} {d_i\choose k-1}
      +\frac{b-\sum_{i=1}^{t-1} d_i}{y}{y\choose k-1}\\
   &\leq& (t-1){n\choose k-1}+\frac{tn-(t-1)n}{y}{y\choose k-1}.
   \end{eqnarray*}
Thus, ${y\choose k-1}\frac{n}{y}\geq {n\choose k-1}-O(n^{k-\frac{3}{2}})$.  A standard 
  calculation yields $y\geq n-O(n^{\frac{1}{2}})$.
\qed

\medskip
Let us consider again the kernel graph $L$ with threshold $s=(2t+2)k$.
Let $S=\{x_1,\ldots, x_t\}$ and let $W\subseteq [n]-S$ be the maximum set of vertices
  such that $L$ contains all the edges from $S$ to $W$.
Since $H'$ is a subgraph of $L$, Claim 2 implies that

\medskip

{\bf Claim 3.}   $|W|\geq n-O(n^{\frac{1}{2}})$.  \qed

\medskip

Let
$$\cF_S=\{F\in {[n]\choose k}: F\cap S\neq \emptyset\}.$$
We have $|\cF_S|=f(n,k,t)$.
Let $Z= V\setminus (S\cup W)$ and $n_1=|Z|$.
Define
$$\cF_1=\{ F\in \cF: F\subseteq Z\},\quad  \cD=\{F\in  {[n]\choose k}: |F\cap S|= |F\cap Z|=1, F\notin \cF \}.$$
By Claim~3 we have $n_1=O(n^{1/2})$.   By (\ref{eq:all_n})  we have

\begin{equation}\label{F1-upper}
|\cF_1|\leq k(2t+2){n_1-1\choose k-1}.
\end{equation}

Let $z\in Z$.  By the definitions of $W$ and $L$ there exists an $x\in S$ with $\deg_\cF^*(x,z)<s$.
This implies that the $(k-2)$-uniform family
 $$
   \{  F\setminus \{x,z\}: \{ x,z\}\subseteq F\in\cF, \, |F\cap W|=k-2 \}
    $$
  contain no  $s$ pairwise disjoint members, so its size is at most
 $s {|W| \choose k-3}$ by Theorem~\ref{matching}.
Hence $\deg_\cD(x,z)\geq {|W|\choose k-2}-s {|W| \choose k-3} $ and
\begin{equation}\label{D-lower}
   |\cD| \geq |Z|\times \left( {|W|\choose k-2}-s {|W| \choose k-3}\right)\geq \Omega (n_1 \cdot n^{k-2}).
\end{equation}

We are ready to complete the proof of the odd case.

\medskip

{\bf Claim 4.} If $\cF$ contains no copy of $\podd$, then $|\cF|\leq f(n,k,t)$. Furthermore, equality holds only
if $\cF$ consists of all the $k$-sets in $[n]$ that meet $S$.

\medskip

{\it Proof of Claim 4.}  First,  we show that every member of $\cF$ that is disjoint from $S$ is contained in $Z$.
Suppose otherwise. Then there is a member $F$ of $\cF$  that is disjoint from $S$ and intersects $W$.
Let $y_1$ be any element in $F\cap W$.
Since $L$ has all the edges from $S$ to $W$ and $|W|$ is large, one can find a path
  $Q=y_1x_1y_2x_2\ldots y_t x_t y_{t+1}$ of length $2t$ in $L$, where $y_1,\ldots, y_t\in W$,
  such that $Q\cap F=\{ y_1\}$.
Using the fact that for each adjacent pair $u,v$ on $Q$,
   $\deg^*_{\cF}(\{u,v\})\geq s=k(2t+2)$,
  we can extend $F\cup Q$ into a copy of $\podd$, a contradiction.

By the definitions of $\cF_S, \cF_1, \cD$ and our discussion above, we have $\cF \subseteq (\cF_S \setminus \cD) \cup \cF_1$.  By Equations (\ref{F1-upper}) and (\ref{D-lower}) and the fact that $n_1=O(n^{\frac{1}{2}})$ we have
\begin{equation} \label{stable}
f(n,k,t)-|\cF|=|\cF_S|-|\cF|\geq  |\cD|-|\cF_1|\geq
\Omega (n_1\cdot n^{k-2}).
\end{equation}
In particular, we have $|\cF|\leq f(n,k,t).$ Furthermore, equality holds only if $|Z|=n_1=0$ and $\cF=\cF_S$. \qed

Now, we prove the even case.

\medskip

{\bf Claim 5.} If $\cF$ contains no copy of $\peven$ then $|\cF|\leq g(n,k,t)$. Furthermore, equality holds only if $\cF$ consists of all
the $k$-sets in $[n]$ that meet $S$ plus all the $k$-sets in $[n]\setminus S$ that contain two fixed elements.

\medskip

{\it Proof of Claim 5.}
In addition to sets $\cF_S, \cF_1$, and $\cD$, we define
$$\cF_2=\{F\in \cF: F\cap S=\emptyset, |F\cap W|\geq 2\},\quad   \cF_3=\{F\in \cF: F\cap S=\emptyset, |F\cap W|=1\}. $$
Obviously
\begin{equation}\label{eq:sum}
  \cF\subseteq (\cF_S\setminus \cD)\cup \cF_1\cup \cF_2\cup \cF_3.
\end{equation}
Next, we obtain upper bounds on $|\cF_2|$ and $|\cF_3|$.

An $r$-intersecting family is a family of sets in which every two members intersect in at least $r$ elements.
Erd\H{o}s-Ko-Rado~\cite{EKR} showed that for fixed $k$ and large $n$ the unique largest $r$-intersecting family
in $[n]$ is given by the family of all $k$-sets containing a fixed set of $r$ elements (for $n > n(k,r)$).

We claim that $\cF_2$ is a $2$-intersecting family in $[n]\setminus S$.
Otherwise, we can find two members $E_1$ and $E_2$ of $\cF_2$ such that
either $|E_1\cap E_2|=0$ or $|E_1\cap E_2|=1$.
In the former case, 
  we can find a path $Q$ of length $2t$ in $L$ using edges between $S$ and $W$
 that meet $E_1$ and $E_2$ each at a single element.
We can then extend $Q\cup E_1\cup E_2$ into a copy of $\peven$, a contradiction.
In the latter case, suppose $E_1\cap E_2=\{y\}$.
Let $w$ be an element in $(E_2\cap W)\setminus \{y\}$.
The element $w$ exists, since $|E_2\cap W|\geq 2$.
We can find a path $Q$ of length $2t$ in $L$ between $S$ and $W$ that meets $E_1\cup E_2$ only in $w$.
Then we can extend $E_1\cup E_2\cup Q$ into a copy of $\peven$, again a contradiction.

We have shown that $\cF_2$ is a $2$-intersecting family in $[n]\setminus S$.
By the Erd\H{o}s-Ko-Rado theorem, (for $n> n_{k,t}$)  we have
\begin{equation}\label{2-intersecting}
|\cF_2|\leq {n-t-2\choose k-2}.
\end{equation}
Furthermore, equality in Equation~(\ref{2-intersecting}) holds only if $\cF_2$ consists of all $k$-sets in
 $[n]\setminus S$ that meet two fixed elements $u,v\in [n]\setminus S$.

Now, consider $\cF_3$.
Let $\wh{\cF}_3=\{F\setminus W: F\in \cF_3\}$. Then $\wh{\cF}_3$ is a collection of $(k-1)$-sets in $Z$.
For a member $C\in \wh{\cF}_3$, define the {\it multiplicity} of $C$ to be the number of different $w\in W$ such that
$C\cup w\in \cF_3$. Let $\wh{\cF}_3'$ denote the set of members of $\wh{\cF}_3$ that have multiplicity $1$
and $\wh{\cF}_3''$ the set of members of $\wh{\cF}_3$ that have multiplicity at least $2$.
Trivially, $|\wh{\cF}_3|\leq {n_1\choose k-1}$. We claim that $\wh{\cF}_3''$ must form an intersecting
family. Otherwise suppose $C_1,C_2$ are two disjoint members of $\wh{\cF}_3''$.
Since $C_1,C_2$ each has multiplicity at least $2$, we can find $w_1,w_2\in W, w_1\neq w_2$, such that
  $E_1=C_1\cup w_1\in \cF_3$ and $E_2=C_2\cup w_2\in \cF_3$.
Now, 
  we can find a path $Q$ of length $2t$ in $L$ between
$w_1$ and $w_2$ using edges of $L$ between $S$ and $W$ such that $Q$ intersects $E_1$ only in $w_1$ and
$E_2$ only in $w_2$.
Then we can extend $Q\cup E_1\cup E_2$ into a copy of $\peven$ in $\cF$, a contradiction.
Since $\wh{\cF}_3''$ is an intersecting family in $Z$, by the Erd\H{o}s-Ko-Rado  theorem,
$$ |\wh{\cF}_3''|\leq \min \left\{ {n_1-1\choose  k-2},  {n_1 \choose k-1}\right\} \leq {n_1\choose k-2}.
$$
Therefore
\begin{equation}\label{F3}
|\cF_3|   \leq |\wh{\cF}_3'|+|W||\wh{\cF}_3''| \leq
      {n_1\choose k-1}  +n{n_1\choose k-2}
     \leq O\left(  n\cdot n^{k-2}_1 \right).
\end{equation}

By (\ref{eq:sum}), (\ref{F1-upper}), (\ref{D-lower}), (\ref{2-intersecting}), (\ref{F3}), and the fact that
$n_1=O(n^{\frac{1}{2}})$,  we have
\begin{eqnarray}  \label{stable2}
|\cF|&\leq&     |\cF_S|       - |\cD|+| \cF_1|+| \cF_2|+| \cF_3|  \notag  \\
 &\leq&f(n,k,t)+{n-t-2\choose k-2}-\Omega(n_1 \cdot n^{k-2})+O(n \cdot n_1^{k-2}) \notag\\
 &=&g(n,k,t)-\Omega(n_1\cdot n^{k-2}). \label{stable2}
\end{eqnarray}
In particular, we have
$$|\cF|\leq g(n,k,t).$$
Furthermore, equality holds only if $\cF$ consists of all the members of $\cF_S$ plus all the $k$-sets
in $[n]$ that are disjoint from $S$ and contain some two fixed elements $u,v$. \qed

\medskip

With Claim 4 and Claim 5, we have completed the proof of Theorem~\ref{main}.
In addition, Equations~(\ref{stable})
and (\ref{stable2}) imply the following stability result on our bounds.

\begin{theorem} \label{stability}
Let $k,t$ be positive integers, where $k\geq 4$. Let $\varepsilon$ be a small positive real.
There exists a positive real $\delta$ and an integer $n_0$ such that  for all integers $n\geq n_0$
if $\cF\subseteq {[n]\choose k}$ contains no copy of $\peven$ or $\podd$ and $|\cF|\geq (t-\delta){n\choose k-1}$
then there exists a set $S$ of $t$ elements in $[n]$ such that all except at most $\varepsilon {n\choose k-1}$ of the
members of $\cF$ intersect $S$.  \qed
\end{theorem}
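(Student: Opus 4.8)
The plan is to re-run the proof of Theorem~\ref{main} (in particular the arguments behind Claims~2--5), recording how each estimate weakens when the input bound $|\cF|\ge t\binom{n-1}{k-1}-c_3n^{k-2}$ is replaced by the weaker hypothesis $|\cF|\ge (t-\delta)\binom{n}{k-1}$. Since every copy of $\peven$ contains a copy of $\podd$, each of the two hypotheses of the theorem forces $\cF$ to be $\peven$-free, so I would assume that throughout and follow the even case (the $\podd$-free case is strictly easier, since then Claim~4 gives $\cF_2=\cF_3=\emptyset$). Put $s=k(2t+2)$ and form exactly the objects of Section~5: a canonical partition $\cF=\cF_0\sqcup\cG_1\sqcup\dots\sqcup\cG_m$, the family $\cF'=\bigcup_i\cG_i$, its $(k,s)$-homogeneous kernel graph $H$ with underlying simple graph $H'$, the threshold $D=n^{1-3/(2(k-1))}$ and the resulting vertex sets $A,B$, the families $\cF_A,\cF_B$ and $\wt\cF=\cF'\setminus(\cF_A\cup\cF_B)$, the auxiliary digraph $\wt H$ of edges from $B$ to $A$, and the sorted out-degrees $d_1\ge\dots\ge d_p$ with $d_i=d^+_{\wt H}(x_i)$. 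None of this uses a lower bound on $|\cF|$, and exactly as in Section~5 one has $|\cF_0|+|\cF_A|+|\cF_B|=O(n^{k-3/2})$ (here $k\ge4$ enters) and, by Claim~1 of Theorem~\ref{peven-general}, $\sum_i d_i=e(\wt H)\le e(H')\le t(n-1)$.

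The hypothesis first enters in the analogue of Claim~2. Now $|\wt\cF|=|\cF|-|\cF_0|-|\cF_A|-|\cF_B|\ge (t-\delta)\binom{n}{k-1}-O(n^{k-3/2})$, while $|\wt\cF|\le\sum_i\binom{d_i}{k-1}$; writing $\alpha_i=d_i/n\in[0,1)$ and dividing by $\binom{n}{k-1}$ gives $\sum_i\alpha_i^{k-1}\ge t-\delta-o(1)$ together with $\sum_i\alpha_i<t$. I would then prove the elementary fact that this pins the profile $(\alpha_i)$ near $(1,\dots,1,0,\dots,0)$ with exactly $t$ ones: from $\alpha-\alpha^{k-1}=\alpha(1-\alpha^{k-2})\ge\alpha(1-\alpha)$ one gets $\sum_i\alpha_i(1-\alpha_i)\le\sum_i(\alpha_i-\alpha_i^{k-1})\le\delta+o(1)=:\delta'$, so for $\delta'$ small no $\alpha_i$ lies in $[2\delta',1-2\delta']$ (since $\alpha(1-\alpha)>\delta'$ there); the number $a$ of indices with $\alpha_i>1-2\delta'$ is at most $t$ (because $\sum\alpha_i<t$) and at least $t$ (because the remaining indices contribute less than $(2\delta')^{k-2}t$ to $\sum_i\alpha_i^{k-1}\ge t-\delta'$), hence $a=t$ and $d_1,\dots,d_t>(1-2\delta')n$ while $d_{t+1},\dots,d_p<2\delta'n$. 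For $n$ large this reads $d_1,\dots,d_t\ge(1-4\delta)n$. This is the step where the argument genuinely uses $\delta$, and I expect it to be the main obstacle: a crude convexity/budget estimate only bounds $d_t$ below by a fixed positive fraction of $n$, which is useless here, since the common kernel-neighbourhood of a $t$-set of vertices each missing a constant fraction of $[n]$ could be empty; the inequality $\alpha-\alpha^{k-1}\ge\alpha(1-\alpha)$ is what makes the argument work and keeps it short.

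Finally let $S=\{x_1,\dots,x_t\}$, let $L$ be the kernel graph of $\cF$ with threshold $s$ (so $H'\subseteq L$), let $W$ be the largest set of vertices outside $S$ joined in $L$ to every vertex of $S$, and set $Z=[n]\setminus(S\cup W)$, $n_1=|Z|$. Each $x_i$ has at least $d_i\ge(1-4\delta)n$ neighbours in $L$, so at most $4t\delta n$ vertices fail to lie in $W$, whence $n_1\le\gamma n$ with $\gamma=\gamma(\delta):=4t\delta\to0$ as $\delta\to0$. The members of $\cF$ disjoint from $S$ are precisely the disjoint union of $\cF_1=\{F\in\cF:F\subseteq Z\}$, $\cF_2=\{F\in\cF:F\cap S=\emptyset,\ |F\cap W|\ge2\}$ and $\cF_3=\{F\in\cF:F\cap S=\emptyset,\ |F\cap W|=1\}$. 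The bounds of Claim~5 apply verbatim, because the only property of $W$ used in its path constructions is that $|W|$ is large compared with $k$ and $t$, which holds as $|W|\ge(1-\gamma)n-t$: $\cF_2$ is $2$-intersecting in $[n]\setminus S$, so $|\cF_2|\le\binom{n-t-2}{k-2}=O(n^{k-2})$; the multiplicity-$\ge2$ part of $\{F\setminus W:F\in\cF_3\}$ is intersecting in $Z$, so $|\cF_3|\le\binom{n_1}{k-1}+n\binom{n_1}{k-2}$; and $|\cF_1|\le k(2t+2)\binom{n_1-1}{k-1}$ by (\ref{eq:all_n}). Using $n_1\le\gamma n$, all the $n_1$-dependent terms are at most $C(k,t)\,\gamma^{k-2}\binom{n}{k-1}$ for a constant $C(k,t)$, while $\binom{n-t-2}{k-2}=o\bigl(\binom{n}{k-1}\bigr)$. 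Given $\varepsilon$ I would therefore first fix $\delta$ so small that $C(k,t)\gamma(\delta)^{k-2}<\varepsilon/2$, then take $n_0$ large enough that $o(1)<\delta$ in the second paragraph, that the Erd\H os--Ko--Rado thresholds used for $\cF_2$ and $\cF_3$ hold, and that the error term $o\bigl(\binom{n}{k-1}\bigr)$ is below $\tfrac12\varepsilon\binom{n}{k-1}$. For every $n\ge n_0$ this makes the number of members of $\cF$ not meeting $S$ strictly less than $\varepsilon\binom{n}{k-1}$, which is the assertion of the theorem.
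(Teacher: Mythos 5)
Your proposal is correct and is essentially the paper's intended argument: the paper justifies this theorem only by pointing back to the Section~5 computations, and what you do is precisely re-run that machinery under the weaker hypothesis $|\cF|\ge(t-\delta)\binom{n}{k-1}$, replacing Claim~2 by a quantitative version giving $d_1,\dots,d_t\ge(1-O(\delta))n$, hence $n_1=O(\delta n)$, and then reusing the Claim~5 upper bounds on $\cF_1,\cF_2,\cF_3$ (the $\cD$-term being unnecessary here). The only inaccuracy is a side remark: the paper's own convexity chain in Claim~2 also adapts to the weaker hypothesis and already yields $d_t\ge(1-O(\delta))n$ rather than merely a constant fraction of $n$, so your $\alpha(1-\alpha)$ estimate is a clean alternative, not a necessity.
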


To close the section, we briefly remark on how Theorem~\ref{loose-paths} is proved.
For $k\geq 4$ and $\ell$ odd, Theorem~\ref{loose-paths} is implied by Theorem~\ref{main}.
For $k\geq 4$ and $\ell$ even, the proof is
essentially the same 
except that we replace (\ref{2-intersecting})  with a simpler claim: 
  there is at most one set that is disjoint from $S$ and contained in $W$ and otherwise
  $|\cF_2| = 1+O(n_1^{k-1})$  (since now we are just forbidding a loose path, instead of a linear path).
For the $k=3$ case, the approach is slightly different. We refer interested readers  to~\cite{JS}.


\section{Long linear paths vs. blow-ups of complete bipartite graphs}

In this section, we describe a related result.  First we prove a lemma. In our application
of the lemma, we will choose $m,n$ so that $m=o(n)$.

\begin{lemma} \label{pigeonhole}
Let $b, \ell,q,t$ be positive integers, where $b\geq {\ell\choose t+1}\cdot q +\ell$.
Let $G$ be a bipartite graph with a bipartition $(X,Y)$ where $|X|=m, |Y|=n$. Suppose $e(G)\geq bm+tn$.
Then $G$ contains either a copy of $P_\ell$ or a copy of $K_{t+1,q}$.
\end{lemma}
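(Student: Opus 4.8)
The plan is to assume $G$ contains no copy of $K_{t+1,q}$ and show it must then contain a $P_\ell$. First I would delete vertices of small degree on the $Y$-side: iteratively remove any vertex of $Y$ whose current degree into $X$ is at most $t$. Each such deletion removes at most $t$ edges, so after discarding all of them we delete at most $tn$ edges and are left with a subgraph $G'$ in which every remaining $Y$-vertex has degree at least $t+1$; moreover $e(G')\geq bm+tn-tn=bm$, so $G'$ is nonempty and some $X$-vertex has positive degree. (If $Y$ becomes empty then all $bm+tn$ edges were deleted, forcing $bm\le 0$, impossible.)

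Next I would exploit the $K_{t+1,q}$-freeness via a standard Kővári–Sós–Turán / double-counting argument on $G'$: count pairs consisting of a $(t+1)$-subset $T\subseteq X$ together with a vertex $y\in Y$ adjacent to all of $T$. On one hand this count is $\sum_{y}\binom{d_{G'}(y)}{t+1}$; on the other, since $G$ has no $K_{t+1,q}$, every fixed $(t+1)$-set $T\subseteq X$ is the common neighborhood-source of at most $q-1$ vertices of $Y$, so the count is at most $(q-1)\binom{m}{t+1}$. Wait — I realize we should instead push the degree concentration onto the $X$-side to make a path: the cleaner route is to observe that since each surviving $y\in Y$ has degree $\ge t+1$, but no $(t+1)$-subset of $X$ is "hit" $q$ times, the number of surviving $Y$-vertices is at most $(q-1)\binom{m}{t+1}$ (map each such $y$ to one $(t+1)$-subset of its neighborhood). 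Hence $e(G')\le (q-1)\binom{m}{t+1}\cdot m$ is false in general — so instead bound $e(G')\le \binom{m}{t+1}(q-1)\cdot(\text{something})$... The correct bound is: the number of surviving vertices of $Y$ is $<\binom{\ell}{t+1}q$ only after we also use that few $X$-vertices matter. So the real final step is a greedy path construction.

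The decisive step, which I expect to be the main obstacle, is turning "every surviving $y$ has $\ge t+1$ neighbors and no $(t+1)$-set of $X$ is covered $q$ times" into a long path. Here is the construction: build the path greedily, alternating between $X$ and $Y$, extending from the current endpoint (say a vertex $y\in Y$) to an unused neighbor in $X$, then from that $X$-vertex to an unused surviving neighbor in $Y$. An $X$-endpoint $x$ fails to extend only if all of its surviving $Y$-neighbors are already on the path; a $Y$-endpoint $y$ fails only if all $\ge t+1$ of its $X$-neighbors are on the path. If the path has fewer than $\ell$ edges, it uses fewer than $\ell$ vertices of $X$; since no $(t+1)$-subset of these path-$X$-vertices is the neighborhood-source of $q$ distinct $Y$-vertices, at most $\binom{\ell}{t+1}(q-1)$ surviving $Y$-vertices have all their neighbors confined to the path's $X$-part. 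Because $b\ge \binom{\ell}{t+1}q+\ell$ and $e(G')\ge bm$, some $X$-vertex $x_0$ has $d_{G'}(x_0)\ge b$, i.e. at least $b$ surviving $Y$-neighbors; starting the path at such $x_0$ and noting $b>\binom{\ell}{t+1}(q-1)+\ell$, there is always a surviving $Y$-neighbor of $x_0$ (hence of the path after rerooting) not yet used and not "blocked", so the path extends until it reaches length $\ell$. The bookkeeping — verifying that the greedy extension never stalls before length $\ell$ given the hypothesis on $b$ — is the part requiring care, and I would organize it as: (i) always keep one endpoint at the high-degree vertex $x_0$ or maintain the invariant that the current endpoint has many spare neighbors, (ii) at each step at most $\binom{\ell}{t+1}(q-1)+(\ell-1)<b$ candidate neighbors are excluded, so an extension exists.
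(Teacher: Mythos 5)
There is a genuine gap in the decisive greedy step. You only clean the $Y$-side, so the vertices of $X$ other than your one high-degree vertex $x_0$ may have arbitrarily small degree in $G'$ (even degree $1$). Your extension rule ``at each step at most $\binom{\ell}{t+1}(q-1)+(\ell-1)<b$ candidate neighbors are excluded, so an extension exists'' implicitly assumes the current endpoint has at least $b$ neighbors, which is guaranteed only for $x_0$. Once the path steps from a $Y$-endpoint $y$ to an unused $X$-neighbor $x$, nothing prevents $x$ from having $y$ as its only neighbor in $G'$, and the greedy stalls; the invariant you propose in (i) cannot be maintained, since \emph{all} unused neighbors of $y$ may be such degree-one vertices (note $y$ is only guaranteed $t+1$ neighbors, far fewer than $b$, so you cannot even select among them using the blocked-vertex count). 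The abandoned K\H{o}v\'ari--S\'os--Tur\'an counting detour does not repair this, and the surplus you keep from the hypothesis ($e(G')\geq bm$ giving one vertex of degree $\geq b$) is being used for the wrong purpose.

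The fix is exactly where your proposal diverges from the paper: spend the $bm$ part of the hypothesis on cleaning the $X$-side as well. Iteratively delete $X$-vertices of current degree less than $b$ \emph{and} $Y$-vertices of current degree at most $t$; fewer than $bm+tn\leq e(G)$ edges are lost, so a nonempty $G'$ remains with minimum degree $\geq b$ on $X$ and $\geq t+1$ on $Y$. Then, instead of a greedy construction, take a \emph{longest} path $Q$ in $G'$ and assume it has length at most $\ell-1$: its endpoint $v$ has all neighbors on $Q$, hence degree $\leq \ell-1<b$, forcing $v\in Y$ and its path-neighbor $u\in X$; since $d_{G'}(u)\geq b\geq \binom{\ell}{t+1}q+\ell$, $u$ has at least $\binom{\ell}{t+1}q$ neighbors off $Q$, each of which must have all of its $\geq t+1$ neighbors on $Q$ (otherwise one obtains a longer path), and the pigeonhole over the at most $\binom{\ell}{t+1}$ possible $(t+1)$-subsets of $V(Q)\cap X$ yields $q$ of them with a common $(t+1)$-set of neighbors, i.e.\ a $K_{t+1,q}$. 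With the two-sided minimum-degree cleaning in place your blocked-vertex bookkeeping would also go through, but without it the argument as written fails.
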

\begin{proof}
We iteratively remove any vertex in $X$ whose degree becomes less than $b$ and any vertex
in $Y$ whose degree becomes at most $t$. We continue the process until no more vertex
(from either $X$ or $Y$) can be removed. Clearly fewer than $bm+tn$ edges are removed in the process.
So the remaining subgraph $G'$ is non-empty. By design, each vertex on $G'$ in $X$ has degree at least
$b$ and each vertex of $G'$ in $Y$ has degree at least $t+1$. Let $Q$ be a longest path in $G'$.
If $Q$ has length at least $\ell$ then $G$ contains $P_\ell$ and we are done. So we may assume that $Q$ has
length at most $\ell-1$.

Let $v$ be an endpoint of $Q$ and $u$ its unique neighbor on $Q$.
Since $Q$ cannot be extended, we have $d_{G'}(v)\leq \ell-1$,
which implies that $v\in Y$ and hence $u\in X$. Since
$d_{G'}(u)\geq b\geq{\ell\choose t+1}\cdot q+\ell$, $u$ has at
least ${\ell\choose t+1}\cdot q$ neighbors in $G'$ that lie
outside $Q$. None of them has a neighbor outside $Q$ or else we
get a path longer than $Q$, a contradiction. But each of them has
at least $t+1$ neighbors in $G'$ (all of which must lie on $Q$).
By the pigeonhole principle, some $q$ of them are adjacent to the
same set of $t+1$ vertices on $Q$. This gives us a copy of
$K_{t+1,q}$ in $G'\subseteq G$.
\end{proof}

\begin{theorem}\label{path-bipartite}
Let $k,\ell, t,q$ be positive integers where $k\geq 4$.  Let $n$ be
a sufficiently large positive integer depending on $k,\ell$.
There exists a constant $C$ depending on
$k,\ell,t,q$ such that every family $\cF\subseteq {[n]\choose k}$ with $|\cF|\geq t{n\choose k-1}+Cn^{k-2+\frac{2}{k-1}}$
contains either a copy of $\pkl$ or a copy of $[K_{t+1,q}]^{(k)}$.
\end{theorem}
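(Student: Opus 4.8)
The plan is to mimic the structure of the proof of Theorem~\ref{peven-general} and Theorem~\ref{main}, but instead of forbidding a path and deriving a contradiction, we show that forbidding \emph{both} $\pkl$ and $[K_{t+1,q}]^{(k)}$ forces the kernel graph to have small circumference, and then convert that bound back into an edge bound on $\cF$. First I would set $s=k\ell$ and apply Theorem~\ref{partition}: either $\cF$ already contains $\pkl$ (and we are done), or $\cF$ decomposes into a canonical partition $\cG_1,\ldots,\cG_m,\cF_0$ with each $\cJ_i$ of rank $k-1$ and type $1$, and $|\cF_0|=O(n^{k-2})$. Set $\cF'=\bigcup_i\cG_i$, let $H$ be the $(k,s)$-homogeneous kernel graph of $\cF'$ and $H'$ its underlying simple graph; as before each edge of $H'$ has at least one marked endpoint. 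The key claim, analogous to Claim~1 in the proof of Theorem~\ref{peven-general}, is that $H'$ has circumference at most $\ell-1$: if $H'$ contained a cycle of length $\geq\ell$, Lemma~\ref{kernel-graph} would give a copy of $[P_\ell]^{(k)}=\pkl$ in $\cF$, a contradiction. (Here we do not even need the ``marked endpoint'' trick—a cycle of length $\geq\ell$ contains a path $P_\ell$ outright.) By Lemma~\ref{circumference} this gives $e(H')\leq \tfrac12(\ell-1)(n-1)$, which by Lemma~\ref{kernel-bound} only yields $|\cF'|\leq \tfrac{\ell-1}{2}{n-1\choose k-1}$—not good enough, since we want the coefficient $t$, not $(\ell-1)/2$.

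To get the sharp coefficient $t$, the circumference bound alone is insufficient; instead I would split the kernel graph $H'$ by degree, exactly as in Section~5. Put $D=n^{1-\frac{?}{k-1}}$ for a suitable exponent (the statement's error term $n^{k-2+\frac{2}{k-1}}$ suggests taking $D$ so that $n\cdot D^{k-1}=O(n^{k-2+\frac{2}{k-1}})$, i.e.\ $D=\Theta(n^{\frac{k-2}{k-1}+\frac{2}{(k-1)^2}})$—the precise choice is a routine optimization). Write $A=\{x: d^+_H(x)\leq D\}$ and $B=\{x:d^+_H(x)>D\}$. Members of $\cF'$ with central element in $A$ number at most $|A|\binom{D}{k-1}=O(n D^{k-1})$. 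Since $e(H')=O(\ell n)$ we get $|B|=O(n/D)$, and $H'[B]$ again has circumference $\leq\ell-1$ so $e(H'[B])=O(\ell\,|B|)=O(\ell n/D)$, contributing at most $O(\ell n/D)\binom{n-2}{k-2}$ members. After discarding these and $\cF_0$, every remaining member $F$ has $c(F)\in B$ and $|F\cap B|=1$; letting $d_i$ be the number of edges of the bipartite graph $\wt H$ from $x_i\in B$ to $A$, we get $|\wt\cF|\leq\sum_i\binom{d_i}{k-1}$ and $\sum_i d_i=O(\ell n)$.

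Now comes the point where the $[K_{t+1,q}]^{(k)}$ hypothesis enters, and this is the step I expect to be the main obstacle. In the proof of Theorem~\ref{main} one shows $d_1,\ldots,d_t\geq n-O(\sqrt n)$ and $d_{t+1},\ldots$ are tiny, using the \emph{assumed} near-extremal size of $\cF$. Here we have no such assumption—we only assume $|\cF|$ is a bit above $t\binom{n}{k-1}$—so the mechanism must be reversed: we must \emph{prove} that $\sum_i\binom{d_i}{k-1}$ cannot be much larger than $t\binom{n}{k-1}$, using the absence of $[K_{t+1,q}]^{(k)}$. The natural tool is Lemma~\ref{pigeonhole} applied to the bipartite graph $\wt H$ (or rather to the kernel graph restricted between $B$ and $A$) with parameters $(b,\ell,q,t)$: if $e(\wt H)\geq bm+tn$ where $m=|B|=o(n)$, then $\wt H$ contains either a $P_\ell$—which via Lemma~\ref{kernel-graph} lifts to $\pkl\subseteq\cF$—or a $K_{t+1,q}$, whose $t+1$ vertices on the ``$A$-side'' are all kernel vertices of threshold-$s$ stars, so by an argument like Lemma~\ref{kernel-graph} this $K_{t+1,q}$ lifts to a copy of $[K_{t+1,q}]^{(k)}\subseteq\cF$. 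Either way we contradict the hypothesis, so $e(\wt H)< bm+tn$, i.e.\ $\sum_i d_i< b\,|B|+tn=tn+O(n/D)$.

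Finally I would convert $\sum_i d_i\leq tn+o(n)$ together with $d_i<n$ into a bound on $\sum_i\binom{d_i}{k-1}$: by convexity the sum is maximized by making as many $d_i$ as possible equal to $n$ (or $n-1$), so $\sum_i\binom{d_i}{k-1}\leq t\binom{n}{k-1}+(\text{error from the fractional part and from }o(n))$. Collecting the discarded pieces—$|\cF_0|=O(n^{k-2})$, the low-degree part $O(nD^{k-1})$, the $H'[B]$-part $O((n/D)\,n^{k-2})$—and choosing $D$ to balance $nD^{k-1}$ against $(n/D)n^{k-2}$ gives the stated error term $O(n^{k-2+\frac{2}{k-1}})$. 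The only genuinely delicate point is making sure the lift of $K_{t+1,q}$ in the kernel graph to $[K_{t+1,q}]^{(k)}$ in $\cF$ goes through: one needs each of the $t+1$ ``hub'' vertices to have threshold-$s$ stars available and the $(t+1)q$ edge-sets to be made pairwise disjoint off their endpoints, which works provided $s=k\ell$ is taken large enough relative to $(t+1)q$—so in fact I would set $s=k\max\{\ell,(t+1)q\}$ from the start, absorbing this into the constant $C$; this is the step to write out carefully, the rest being the now-standard delta-system bookkeeping.
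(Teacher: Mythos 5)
Your proposal follows essentially the same route as the paper's proof: the canonical delta-system partition, the $(k,s)$-homogeneous kernel graph split into low/high out-degree parts $A$ and $B$, Lemma~\ref{pigeonhole} applied to the bipartite graph $\wt{H}$ of edges from $B$ to $A$, and Lemma~\ref{kernel-graph} to lift the resulting $P_\ell$ or $K_{t+1,q}$ back into $\cF$ with $s=\max\{k\ell,kq(t+1)\}$ (the paper even makes the same choices up to the exact value of $D$ and runs the count in the contrapositive direction rather than via your convexity bound on $\sum_i\binom{d_i}{k-1}$). Your two small slips are harmless: a cycle of length exactly $\ell$ contains only $P_{\ell-1}$ (but only $e(H')=O(\ell n)$ is needed, and the marked-vertex trick recovers circumference $<\ell$ anyway), and the $(t+1)$-vertex side of the $K_{t+1,q}$ produced by Lemma~\ref{pigeonhole} lies in $B$ rather than $A$ (irrelevant, since the lift only uses that every edge of $\wt{H}$ has kernel degree at least $s$).
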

\begin{proof}
The set up of the proof will be similar to that in Section 5.
Let $s=\max\{k\ell, kq(t+1)\}$.
We may assume that $\cF$ contains no copy of $\pkl$ and argue that $\cF$ must contain $[K_{t+1,q}]^{(k)}$.
Let $\cG_1,\ldots, \cG_m,\cF_0$ be a canonical partition of $\cF$, where for each $i\in [m]$,
$\cG_i$ is $(k,s)$-homogeneous with intersection pattern $\cJ_i$ that has rank $k-1$ and is
of type $1$ and $|\cF_0|\leq \frac{1}{c(k,s)}{n\choose k-2}$. Let $\cF'=\bigcup_{i=1}^m \cG_i$.
Let $H$ be the $(k,s)$-homogeneous graph of $\cF'$ and $H'$ the underlying undirected
simple graph of $H$.  Since $\cF'$ doesn't contain
$\pkl$, by Claim~1 of Lemma~\ref{circumference}, $H'$ has circumference less than $\ell$ and
hence $e(H')<\ell n$ and $e(H)\leq 2\ell n$. Therefore $\sum_{x\in V(H)} d^+(x)\leq 2\ell n$.
Let $D=n^{\frac{k-3}{k-1}}$.
Define $$A=\{x\in V(H): d^+(x)\leq D\}  \quad
B= \{x\in V(H): d^+(x) > D\}.$$

Let $\cF_A$ denote the set of members $F$ of $\cF'$ whose central element $c(F)$ lies in $A$.
We have
$$|\cF_A|\leq |A|\cdot {D\choose k-1}<n\cdot D^{k-1}=n^{k-2}.$$
Since $\sum_{x\in V(H)} d^+(x)<2\ell n$, we have $|B|<2\ell n/D=2\ell n^{\frac{2}{k-1}}$.
The subgraph of $H'$ induced by $B$, denoted by $H'[B]$, also has circumference at most
$2\ell$ and thus $e(H'[B])< \ell|B|<2\ell^2n^{\frac{2}{k-1}}$.
Let $\cF_B$ denote the set of members of $\cF'$ that contain edges of $H'[B]$. 
We have
$$|\cF_B|\leq e(H'[B]){n-2\choose k-2}<2\ell^2n^{k-2+\frac{2}{k-1}}.$$

Let $\wt{\cF}=\cF'\setminus (\cF_A\cup \cF_B)=\cF\setminus(\cF_0\cup\cF_A\cup \cF_B)$.
By our discussions above,  for large enough $C$ we have
\begin{equation} \label{F-tilde}
 |\wt{\cF}|\geq t{n-1\choose k-1}+\frac{C}{2}n^{k-2+\frac{2}{k-1}}.
\end{equation}
By our definition of $\cF_A$ and $\cF_B$, we have
$$\wt{\cF}=\{F\in \cF': c(F)\in B, |F\cap B|=1\}.$$ Let $\wt{H}$ denote the subgraph of $H$
consisting of all edges going from $B$ to $A$.  Note that $\wt{H}$
contains no multiple edges. We will ignore the directions on the
edges of $\wt{H}$ and treat it as an undirected bipartite graph.
Let $b={\ell\choose t+1}\cdot q +\ell$. By Equation
(\ref{F-tilde}) and the proof of Lemma~\ref{kernel-bound}, we have    
$$E(\wt{H})>tn+   
   C n^{\frac{2}{k-1}}\geq t|A|+b|B|,
   $$
By Lemma~\ref{pigeonhole}, $\wt{H}$ contains either a copy of $P_\ell$ or
a copy of $K_{t+1,q}$. Thus, $H\supseteq P_\ell$ or $H\supseteq K_{t+1,q}$. By Lemma~\ref{kernel-graph},
$\cF$ contains either $\pkl$ or $[K_{t+1,q}]^{(k)}$.
\end{proof}

Theorem~\ref{path-bipartite} yields

\begin{corollary} \label{path-bipartite-cor}
Let $k,\ell, t,q$ be positive integers where $k\geq 4$.  We have
$$\ex_k(n,\{\pkl,[K_{t+1,q}]^{(k)}\})\leq t{n-1\choose k-1}+O(n^{k-2+\frac{2}{k-1}}).$$
\end{corollary}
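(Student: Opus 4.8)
The plan is to deduce Corollary~\ref{path-bipartite-cor} directly from Theorem~\ref{path-bipartite}, which is already a purely existential statement about families of prescribed size. I would argue by contraposition on the Tur\'an function: suppose $\cF\subseteq\binom{[n]}{k}$ contains neither a copy of $\pkl$ nor a copy of $[K_{t+1,q}]^{(k)}$. Then $\cF$ is a feasible family for the forbidden pair $\{\pkl,[K_{t+1,q}]^{(k)}\}$, so by definition $|\cF|\leq \ex_k\!\left(n,\{\pkl,[K_{t+1,q}]^{(k)}\}\right)$; what we must show is the \emph{upper} bound on this quantity, i.e.\ that no such $\cF$ can be too large.

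\medskip
\noindent\textbf{Main argument.} Fix $k\geq 4$ and the parameters $\ell,t,q$, and let $C=C(k,\ell,t,q)$ be the constant furnished by Theorem~\ref{path-bipartite}. Choose $n_0=n_0(k,\ell)$ large enough that Theorem~\ref{path-bipartite} applies for all $n\geq n_0$. Let $n\geq n_0$ and let $\cF\subseteq\binom{[n]}{k}$ be any family that contains no copy of $\pkl$ and no copy of $[K_{t+1,q}]^{(k)}$. If we had $|\cF|\geq t\binom{n}{k-1}+Cn^{k-2+\frac{2}{k-1}}$, then Theorem~\ref{path-bipartite} would force $\cF$ to contain one of the two forbidden configurations, a contradiction. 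Hence $|\cF|< t\binom{n}{k-1}+Cn^{k-2+\frac{2}{k-1}}$. Since this holds for every admissible $\cF$, it bounds the extremal number:
\[
\ex_k\!\left(n,\{\pkl,[K_{t+1,q}]^{(k)}\}\right)\leq t\binom{n}{k-1}+Cn^{k-2+\frac{2}{k-1}}.
\]
Finally, convert the leading term from $\binom{n}{k-1}$ to $\binom{n-1}{k-1}$ using $\binom{n}{k-1}=\binom{n-1}{k-1}+\binom{n-1}{k-2}$ and absorbing the $t\binom{n-1}{k-2}=O(n^{k-2})$ correction into the $O(n^{k-2+\frac{2}{k-1}})$ error term; this yields exactly the stated inequality. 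For the finitely many $n<n_0$ the bound is trivial after inflating the implicit constant, since $\ex_k(n,\cdot)\leq\binom{n}{k}$.

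\medskip
\noindent\textbf{Obstacle.} There is essentially no obstacle here: the corollary is a formal restatement of Theorem~\ref{path-bipartite} in Tur\'an-function language, and the only minor point of care is the passage from $t\binom{n}{k-1}$ to $t\binom{n-1}{k-1}$, where one must check that the discrepancy is of lower order than the error term $n^{k-2+\frac{2}{k-1}}$ (it is $O(n^{k-2})$, which is smaller since $\frac{2}{k-1}>0$). Everything else is bookkeeping of constants.
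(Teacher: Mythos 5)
Your proposal is correct and matches the paper's treatment: the corollary is stated there as an immediate consequence of Theorem~\ref{path-bipartite}, obtained exactly as you do by applying the theorem contrapositively to any $\{\pkl,[K_{t+1,q}]^{(k)}\}$-free family and absorbing the $t\binom{n-1}{k-2}=O(n^{k-2})$ discrepancy (and the finitely many small $n$) into the error term.
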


If we set $\ell=2t+2$ and $q=t+2$, then since $K_{t+1,t+2}\supseteq P_{2t+2}$ Corollary~\ref{path-bipartite-cor} yields
\begin{corollary}
Let $k,t$ be positive integers where $k\geq 4$. For sufficiently large $n$ we have
$$\ex_k(n,\podd)\leq \ex_k(n,\peven)\leq t{n-1\choose k-1}+O(n^{k-2+\frac{2}{k-1}}).$$
\end{corollary}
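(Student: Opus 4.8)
The plan is to read this off directly from Corollary~\ref{path-bipartite-cor} (equivalently Theorem~\ref{path-bipartite}) by choosing the auxiliary forbidden blow-up to be one whose skeleton already contains the path $P_{2t+2}$. Concretely, I would invoke Corollary~\ref{path-bipartite-cor} with $\ell=2t+2$ and $q=t+2$. The only genuinely new ingredient needed is the elementary graph fact $P_{2t+2}\subseteq K_{t+1,t+2}$: the path on $2t+3$ vertices is bipartite with colour classes of sizes $t+1$ and $t+2$, which fit inside the two sides of $K_{t+1,t+2}$ (this is exactly why one needs $q=t+2$, and no smaller value will do). Together with the monotonicity of the $k$-blow-up under subgraphs of the skeleton --- restrict $[K_{t+1,t+2}]^{(k)}$ to the hyperedges coming from the edges of $P_{2t+2}$ --- this gives $\peven=[P_{2t+2}]^{(k)}\subseteq[K_{t+1,t+2}]^{(k)}$.

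It follows that any $\cF\subseteq{[n]\choose k}$ which is $\peven$-free is automatically $[K_{t+1,t+2}]^{(k)}$-free, so
$$\ex_k(n,\peven)\;\leq\;\ex_k\bigl(n,\{\peven,[K_{t+1,t+2}]^{(k)}\}\bigr)\;\leq\; t{n-1\choose k-1}+O\bigl(n^{k-2+\frac{2}{k-1}}\bigr)$$
by Corollary~\ref{path-bipartite-cor} with $\ell=2t+2$, $q=t+2$; this is the right-hand inequality. For the left-hand inequality, observe that $\podd\subseteq\peven$: deleting the last hyperedge of $\peven$ together with its $k-1$ vertices that lie in no other hyperedge leaves a copy of $\podd$. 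Hence a family containing $\peven$ contains $\podd$, so every $\podd$-free family is $\peven$-free, and therefore $\ex_k(n,\podd)\leq\ex_k(n,\peven)$.

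There is essentially no obstacle in this proof: all the substantive work was already done in Theorem~\ref{path-bipartite}, and what remains is only to match the forbidden bipartite skeleton to the linear path through the blow-up operation. The two things one must be careful about are the direction of each monotonicity argument and the precise bipartition sizes that force $P_{2t+2}\subseteq K_{t+1,t+2}$ (hence the choice $q=t+2$).
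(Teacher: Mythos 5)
Your proposal is correct and follows exactly the paper's route: apply Corollary~\ref{path-bipartite-cor} with $\ell=2t+2$, $q=t+2$, using $P_{2t+2}\subseteq K_{t+1,t+2}$ so that a $\peven$-free family is automatically $[K_{t+1,t+2}]^{(k)}$-free, together with the trivial monotonicity $\ex_k(n,\podd)\leq\ex_k(n,\peven)$. You merely spell out the containment and monotonicity details that the paper leaves implicit, so there is nothing to add.
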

This is almost as good as the bound in Theorem~\ref{peven-general}. However, Corollary~\ref{path-bipartite-cor}
is a more general result, since $\ell$ and $q$ can be an arbitrary constants independent of $t$.
So with essentially the same bound, we get either the blowup of a complete bipartite graph with $t+1$ vertices
on one side or the blow up of an arbitrarily long path.


\section{Remarks and Problems}

\subsection{Triple systems}
Theorem \ref{main} (for linear paths) holds for $k\geq 4$.
Our method does not quite work for  the $k=3$ case.
We {\bf conjecture} that a similar result holds for $k=3$.
On the other hand,  as remarked at the end of Section 5,
Theorem \ref{loose-paths} (for loose paths) does hold for the $k=3$ case using the approach given
in this paper. 

\subsection{Hamilton paths and cycles}
Since the paper by  Katona and Kierstead~\cite{KK}
there is a renewed interest
 concerning paths and (Hamilton) cycles in uniform hypergraphs.
Most of these are Dirac type results
 (large minimum degree implies the existence of the desired substructure)
 like in K\"uhn  and  Osthus~\cite{KO},
 R\"odl, Ruci\'nski,  and Szemer\'edi~\cite{RRS}
 or in Dorbec, Gravier, and S\'ark\"ozy~\cite{DGS}.

\subsection{Long paths}
As the value of $c(k,s)$ in Lemma~\ref{homogeneous} is double exponentially
 small in $k$ and $s$ one can see that our exact results hold for
$$k \ell = O(\log \log n).$$
It would be interesting to close the gap, especially we {\bf conjecture} that
  our result holds for much larger $\ell$, maybe till $k \ell$
is as large as $O(n)$.

\subsection{Linear trees}
A family of sets $F_1, \dots, F_\ell$ is called a \emph{linear}
tree, if  $F_i$ meets $\cup_{j<i} F_j$ in exactly one vertex for all $1< i\leq \ell$.
Any (usual, 2-uniform) tree
 $\TT$ can be blown up in a natural way to a $k$-uniform linear tree $\TT^{(k)}$.
If the minimum number of vertices to cover all edges of  $\bbT$ is
$\tau$, then
 $\ex_k(n, \TT^{(k)})\geq (\tau -1+o(1))  \binom{n-1}{k-1}$.
But one can make a better lower bound. Define
$$\sigma (\TT):=\min \{  |A|+ e(\TT\setminus A): A\subset V(\bbT) \, {\rm independent }\}.$$
We have  ${  \tau \leq \sigma } \leq |V_1|\leq |V_2|$, where
$V_1\cup V_2=V$ is the unique
 two-coloring of $\bbT$.
Define
$$ \cF_0^{(k)}(n,s):=\{ F\in \binom{[n]}{k}: |F\cap \{ 1,2,\dots,  s\} |=1 \}.
 $$
In case of $s<\sigma$ this hypergraph does not contain $\TT^{(k)}$.
\begin{theorem}{\bf \cite{furedi_2011}}
$$
 (\sigma -1) {n-\sigma +1\choose k-1} \leq \ex(n, \TT^{(k)})=  (\sigma -1+o(1)) {n-1\choose k-1}.
  $$
 \end{theorem}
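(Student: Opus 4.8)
The plan is to prove the two inequalities separately. For the lower bound $\ex(n,\TT^{(k)})\ge (\sigma-1)\binom{n-\sigma+1}{k-1}$, I would exhibit the explicit construction $\cF_0^{(k)}(n,\sigma-1)$ and verify it is $\TT^{(k)}$-free. The construction consists of all $k$-sets meeting a fixed set $S$ of size $\sigma-1$ in \emph{exactly one} vertex; one also throws in all $k$-sets inside $[n]\setminus S$ for free, but the binomial count $(\sigma-1)\binom{n-\sigma+1}{k-1}$ already comes from the ``exactly one'' sets, and I would just record the stated lower bound (a cleaner count gives a matching main term $(\sigma-1+o(1))\binom{n-1}{k-1}$). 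The key verification: suppose $\TT^{(k)}\subseteq \cF_0^{(k)}(n,\sigma-1)$ via an embedding $\phi$ of the skeleton tree $\TT$. Let $A=\phi^{-1}(S)\subseteq V(\TT)$. Since each hyperedge of the linear tree $\TT^{(k)}$ contains exactly one skeleton edge and meets $S$ in exactly one vertex, every edge of $\TT$ not covered by $A$ must be ``realized'' using a new (non-skeleton) vertex of that hyperedge lying in $S$ --- impossible, since distinct hyperedges of a linear blow-up have disjoint new-vertex sets and $|S|=\sigma-1 < e(\TT\setminus A)$ would be forced; working this out shows $A$ must be a vertex cover of $\TT$ of a special kind, and $|A|+e(\TT\setminus A)\ge \sigma$ by definition of $\sigma$ contradicts $|S|=\sigma-1$. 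I would also need $A$ independent in $\TT$, which follows because two adjacent skeleton vertices both mapped into $S$ would put two $S$-vertices in one hyperedge.

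For the upper bound $\ex(n,\TT^{(k)})\le (\sigma-1+o(1))\binom{n-1}{k-1}$, I would mimic Section 4--5: take a $\TT^{(k)}$-free family $\cF$, pass to a canonical partition $\cG_1,\dots,\cG_m,\cF_0$ via Theorem~\ref{partition} (with $s$ large, $s\ge k\cdot e(\TT)$), so each $\cG_i$ is $(k,s)$-homogeneous of rank $k-1$ type $1$ with a central element, and $|\cF_0|=O(n^{k-2})$. Form the $(k,s)$-homogeneous kernel graph $H$ and its underlying simple graph $H'$. The crux is a structural claim analogous to Claim~1 in Theorem~\ref{peven-general}: \emph{$H'$ contains no copy of $\TT$ together with $\sigma-1$ extra ``pendant-usable'' marked vertices} --- more precisely, I want to show that if $H'$ has ``too many'' edges then, using Lemma~\ref{kernel-graph} to realize a skeleton copy of $\TT$ inside $\cF$ and then using the kernel-vertex property (each marked/central vertex is the kernel of an $s$-star, so its hyperedge can be re-chosen to avoid any bounded set), one can embed $\TT^{(k)}$ into $\cF$. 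The right graph-theoretic statement to feed in is: any $n$-vertex graph with more than $(\sigma-1)n(1+o(1))$ edges contains a copy of $\TT$ in which one can greedily extend --- i.e. $H'$ must have the property that deleting at most $\sigma-1$ vertices kills all copies of $\TT$ in a suitable sense, forcing $e(H')\le(\sigma-1)(n-1)+O(1)$ by an Erd\H{o}s--Gallai / Erd\H{o}s--S\'os-type argument specialized to the fixed tree $\TT$. Then Lemma~\ref{kernel-bound} gives $|\cF'|\le \frac{e(H')}{k-1}\binom{n-2}{k-2}\le(\sigma-1)\binom{n-1}{k-1}(1+o(1))$, and adding $|\cF_0|=O(n^{k-2})$ finishes.

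The main obstacle is the structural claim in the upper bound: translating ``$H'$ has many edges'' into ``$\cF$ contains $\TT^{(k)}$''. For a \emph{path} this was Claim~1 of Theorem~\ref{peven-general}, which used the very clean fact (Erd\H{o}s--Gallai, Lemma~\ref{circumference}) that high edge density forces a long cycle, hence a long path with a marked endpoint. For a general tree $\TT$ the analogue is subtler: I cannot just find a long path; I must find a subdivision-free copy of $\TT$ itself inside $H'$ such that enough of its vertices are \emph{marked} (central), because only at marked vertices can I invoke the $s$-star freedom to route the new vertices disjointly. The parameter $\sigma(\TT)$ enters exactly here: the ``hard'' part of $\TT$ to embed greedily is the part needing genuinely new structure, while an independent set $A$ of size up to $\sigma - 1$ of ``problem'' vertices can be absorbed into the fixed transversal $S$. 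So the heart of the argument is: if $e(H')>(\sigma-1+\e)n$, then after discarding a sublinear set of low-degree or $B$-internal edges (the $A/B$ split as in Section~5), the bipartite ``kernel'' graph $\wt H$ from $B$ to $A$ has enough edges that a pigeonhole/greedy embedding (in the spirit of Lemma~\ref{pigeonhole} and Theorem~\ref{path-bipartite}, but with $\TT$ in place of $P_\ell$) produces $\TT$ with all skeleton vertices landing on marked/high-degree vertices --- and then Lemma~\ref{kernel-graph} upgrades this to $\TT^{(k)}\subseteq\cF$. Making the tree-embedding lemma precise, with the correct dependence on $\sigma(\TT)$ rather than on $\tau(\TT)$ or $|V(\TT)|$, is where the real work lies; everything else is bookkeeping transplanted from Sections 3--5.
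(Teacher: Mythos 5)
First, a point of reference: this theorem is not proved in the present paper at all --- it is quoted from \cite{furedi_2011} --- so your attempt has to stand on its own. Your lower bound is essentially right and is exactly the construction $\cF_0^{(k)}(n,\sigma-1)$ displayed just before the theorem: the skeleton vertices mapped into $S$ form an independent set $A$ (two adjacent skeleton vertices in $S$ would give a hyperedge meeting $S$ twice), each edge of $\TT\setminus A$ must spend a distinct new vertex in $S$, and these are also distinct from the images of $A$, so $|S|\ge |A|+e(\TT\setminus A)\ge\sigma$, a contradiction; the count $(\sigma-1)\binom{n-\sigma+1}{k-1}$ follows. But delete the aside that one may ``throw in all $k$-sets inside $[n]\setminus S$ for free'': the complete $k$-graph on $[n]\setminus S$ contains $\TT^{(k)}$ outright once $n$ is large, so that enlarged family is not $\TT^{(k)}$-free; the remark is harmless only because you never use those sets in the count or in the verification.

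The genuine gap is the upper bound, which is the whole content of the theorem. Your plan reduces it to the claim that $\TT^{(k)}$-freeness of $\cF$ forces $e(H')\le(\sigma-1)(n-1)+O(1)$, to be obtained ``by an Erd\H{o}s--Gallai / Erd\H{o}s--S\'os-type argument specialized to $\TT$'', and you explicitly leave that claim unproved. This is not a routine adaptation, and no off-the-shelf tree-embedding theorem can deliver it, because $\sigma-1$ is not a graph-Tur\'an constant for $\TT$: take the double broom $\TT$ obtained from the path $a_1a_2a_3a_4$ by attaching $m$ leaves at $a_1$ and at $a_4$; then $\sigma=3$, while $\TT$-free graphs can have about $(m+1)n$ edges (disjoint cliques on $|V(\TT)|-1$ vertices), so knowing only that $H'$ contains no usable copy of $\TT$ gives a constant off by an unbounded factor. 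Conversely, what $\TT^{(k)}$-freeness really forbids in $H'$ are much smaller \emph{marked} configurations: for that double broom, already a path of length $3$ in $H'$ with both endpoints marked forces $\TT^{(k)}\subseteq\cF$ (realize the three path hyperedges via Lemma~\ref{kernel-graph} and take the $2m$ pendant hyperedges as petals of the singleton-kernel $s$-stars at the two marked endpoints, with $s\ge k\,e(\TT)$). So the statement you actually need is a new extremal lemma, with constant exactly $\sigma-1$, for graphs with marked vertices that avoid a whole family of reduced patterns of $\TT$ in which certain deleted edges are re-created at marked vertices --- or else one must bypass $e(H')$ and count $\cF$ directly, as Claims~2--5 of Section~5 do for paths using much more than the edge count of $H'$. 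Formulating and proving that $\sigma$-dependent lemma is precisely the work carried out in \cite{furedi_2011}; since your proposal defers exactly this step (``where the real work lies''), the upper bound, and hence the theorem, is not established by the proposal.
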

It would be interesting to find asymptotics for the Tur\'an numbers of other linear trees.

\subsection{Kernel graphs and $k$-blowups}

The Kernel graph approach we developed in this paper can potentially be very useful in
attacking other hypergraph Tur\'an problems, particularly the ones concerning
$k$-blow ups of other graphs besides paths. Some related  notions of  expanded graphs were
investigated in earlier papers such as in~\cite{dhruv},~\cite{oleg}, and~\cite{sidorenko}.
The use of appropriately defined auxiliary graphs may ultimately provide a useful approach
for extending extremal results on graphs to hypergraphs
(see~\cite{KM} for example for a successful use of so-called link graphs ).

\subsection{The Erd\H os--S\'os and the Kalai conjecture}\label{traces}

A system of $k$-sets $\TT:=\{ E_1, E_2, \dots, E_q\}$ is called a
{\bf tight} tree
 if for every $2\leq i\leq q$ 
 we have $|E_i \setminus \cup_{j< i} E_j|=1$, and there exists an
  $\alpha=\alpha(i)< i$
 such that $|E_\alpha\cap E_i|=k-1$.
The case $k=2$ corresponds to the usual trees in graphs. Let $\TT$
be a $k$-tree on $v$ vertices, and
 let $\ex_k(n,\TT)$ denote the maximum size of a
$k$-family on $n$ elements without  $\TT$.
Consider a $P(n,v-1,k-1)$ packing $P_1$, \dots, $P_m$ on the
vertex set $[n]$
  (i.e., $|P_i|=v-1$ and $|P_i\cap P_j|< k-1$ for $1\leq i<j\leq m$) and replace
  each $P_i$ by a complete $k$-graph.
We obtain a $\bbT$-free hypergraph. Then  R\"odl's~\cite{R}
theorem on almost optimal packings gives
$$
   \ex_k(n,\TT)\geq (1-o(1))\frac{{n\choose k-1}}{{v-1\choose k-1}}\times
  {v-1\choose k} = (1-o(1))\frac{v-k}{k}{n \choose k-1}.
  $$ 
\begin{conjecture}\label{conj2}
{\rm (Erd\H os and S\'os for graphs,
 Kalai 1984 for all $k$, see in~\cite{frankl-furedi})}\newline
$$ \ex_k(n,\TT)\leq \frac{v-k}{k}{n \choose k-1}.$$
  \end{conjecture}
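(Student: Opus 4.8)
This statement is a conjecture, not a theorem, and a notoriously hard one: for $k=2$ it \emph{is} the Erd\H os--S\'os conjecture, still open in general, so an unconditional proof is presumably out of reach. The realistic target is a proof conditional on the graph case, or the asymptotic bound $\ex_k(n,\TT)\le(v-k+o(1))\binom{n}{k-1}/k$ for $k\ge 3$ and large $n$; I describe a plan toward the latter. Write $q=v-k+1$ for the number of edges of $\TT$. Suppose $\cF\subseteq\binom{[n]}{k}$ has $|\cF|>\frac{v-k}{k}\binom{n}{k-1}$ and no copy of $\TT$. Fix $s$ large in terms of $k$ and $q$ and, exactly as in Section~5, pass to a canonical partition $\cF=\cG_1\cup\dots\cup\cG_m\cup\cF_0$, where $|\cF_0|\le\frac1{c(k,s)}\binom{n}{k-2}$ and each $\cG_i$ is $(k,s)$-homogeneous with intersection pattern $\cJ_i$; the plan is to show the union of the $\cG_i$ already violates the count.

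First dispose of the high-rank pieces. If some $\cJ_i$ has rank $k$, then by Lemma~\ref{rank-facts}(1) it is the family of \emph{all} proper subsets of $[k]$, so for each $F\in\cG_i$ and each $x\in F$ the $(k-1)$-set $F\setminus\{x\}$ is the kernel of an $s$-star. Since a tight tree is built by repeatedly attaching one new vertex to a $(k-1)$-subset of an already-placed edge and has only $q$ edges, a greedy embedding --- at each step choosing a petal of the relevant star that avoids the fewer than $kq\le s$ vertices used so far --- yields $\TT\subseteq\cG_i$, a contradiction. Hence every $\cJ_i$ has rank $\le k-1$. A genuinely new difficulty appears in the rank-$(k-1)$, type-$2$ case: unlike the linear-path situation of Theorem~\ref{tree}, a type-$2$ pattern need \emph{not} contain a tight tree, since it may contain no $(k-1)$-subset of $[k]$ at all (for instance the downset of all $(k-2)$-sets). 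I would try to refine Lemma~\ref{rank-facts}(3) to record exactly which $(k-1)$-sets a type-$2$ pattern contains, and argue that either a type-$2$ piece still forces $\TT$, or such pieces contribute only $O(n^{k-2})$ in total.

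That leaves the type-$1$ rank-$(k-1)$ pieces, in which each $F$ has a central vertex $c(F)$ and the $k-1$ many $(k-1)$-subsets of $F$ through $c(F)$ are kernels of $s$-stars. Here I would imitate Section~4: form a $(k-1)$-uniform auxiliary ``kernel hypergraph'' on $[n]$ whose edges are these kernel $(k-1)$-sets, together with a marking of the central vertices, observe that $\TT$-freeness forbids a corresponding ``tight-tree skeleton'' in it (via a $(k-1)$-uniform analogue of Lemma~\ref{kernel-graph}), and feed this into a packing/density estimate that plays the role the Erd\H os--Gallai circumference theorem played for paths. Double-counting $k|\cF|=\sum_{W}\deg_\cF(W)$ over $(k-1)$-sets should then convert such an estimate into the constant $\frac{v-k}{k}$, provided the auxiliary extremal problem --- ``a $\TT$-skeleton-free $(k-1)$-uniform hypergraph on $N$ vertices has at most $\tfrac{v-k}{k-1}\binom{N}{k-2}(1+o(1))$ edges'' --- can be settled.

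The principal obstacles, in decreasing order of severity, are: (1) the last step is, in disguise, a $(k-1)$-uniform instance of the very conjecture being proved, so without an external input it cannot improve on the graph case --- which is why the honest goal is conditional or asymptotic; (2) even inside a single type-$1$ piece the greedy embedding does not automatically produce an arbitrary tight tree, because a fixed edge of $\TT$ may be required to share all $k$ of its $(k-1)$-subsets with neighbours while only the $k-1$ through a central vertex are available, and circumventing this needs a clever choice of which vertex of each tree-edge is sent to a central vertex (available, say, when no edge of $\TT$ is ``saturated'' by its neighbours) or a different argument altogether --- precisely the phenomenon that makes Erd\H os--S\'os hard; (3) the type-$2$ analysis sketched above; and (4) the usual gap between the exact inequality conjectured and what the delta-system method delivers, namely large $n$ and an $o(1)$ error term.
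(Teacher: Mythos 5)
There is nothing in the paper to compare your proposal against: the statement you were given is Conjecture~\ref{conj2}, which the paper states as an open problem (attributed to Erd\H os--S\'os for $k=2$ and Kalai for general $k$) and does not prove. The only results the paper cites in its direction are the resolution of the graph case for large trees \cite{AKSSzSharp} and the case of star-shaped tight trees \cite{frankl-furedi}. You correctly recognize this and offer a plan rather than a proof, so the right way to assess your submission is to ask whether the plan could close the gap; it cannot, for a reason you yourself half-identify. The load-bearing step of your scheme --- ``a $(k-1)$-uniform hypergraph with no copy of the tight-tree skeleton has at most roughly $\frac{v-k}{k-1}\binom{N}{k-2}$ edges'' --- is exactly the same conjecture with $k$ replaced by $k-1$. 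Iterating the reduction bottoms out at $k=2$, i.e.\ at the Erd\H os--S\'os conjecture itself, which is known only for large trees and only via the (unpublished) work cited above. So the scheme is circular as an unconditional argument, and even as a conditional one it transfers the entire difficulty rather than removing any of it.

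Beyond the circularity, two concrete steps of the sketch would fail as written. First, in a type-$1$ rank-$(k-1)$ homogeneous piece the only $(k-1)$-subsets of a member $F$ that are guaranteed to be kernels of $s$-stars are the $k-1$ subsets through the central element $c(F)$; a tight tree may require attaching a new edge along a $(k-1)$-subset of $F$ that avoids $c(F)$, and there is no analogue of Lemma~\ref{kernel-graph} to rescue this, because the attachment set is a full $(k-1)$-set rather than a pair containing a marked vertex. This is not a technicality: it is precisely the obstruction at which the known partial result stops (star-shaped trees, where every attachment goes through one central edge, are handled in \cite{frankl-furedi}). Your parenthetical ``available when no edge of $\TT$ is saturated by its neighbours'' concedes the point but does not supply an argument for general $\TT$. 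Second, the conjecture asserts an exact inequality for every $n$, whereas the canonical partition of Theorem~\ref{partition} inevitably discards a residual family $\cF_0$ of size $\Theta(n^{k-2})$ and requires $n$ large relative to $c(k,s)$; so even complete success along your lines would prove only the asymptotic statement $\ex_k(n,\TT)\leq\bigl(\frac{v-k}{k}+o(1)\bigr)\binom{n}{k-1}$, not Conjecture~\ref{conj2} itself. The rank-$k$ embedding you describe is fine, and your observation that type-$2$ patterns need not contain any $(k-1)$-set (so Theorem~\ref{tree} has no direct analogue for tight trees) is accurate, but the proposal as a whole should be read as a statement of the known difficulties rather than progress past them.
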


The Erd\H os--S\'os conjecture has been recently proved by a
monumental work of Ajtai, Koml\'os, Simonovits, and
Szemer\'edi~\cite{AKSSzSharp},
 for $v\geq v_0$.

  The Kalai conjecture has been proved for {\bf star-shaped} trees
 in~\cite{frankl-furedi},
i.e., whenever $\TT$ contains a central edge which intersects all
other edges in $k-1$
 vertices.
For $k=2$ these are the diameter 3 trees,  'brooms'.


\end{document}